\documentclass[10pt,a4paper]{article}
 \topmargin=-1cm
 \textwidth=16cm
 \textheight=23cm
 \oddsidemargin=0cm
\usepackage{enumerate}
\usepackage{latexsym}
\usepackage{amssymb}
\usepackage{amsthm}
\usepackage{amsmath}
\usepackage{ascmac}

\theoremstyle{plain}
\newtheorem{thm}{Theorem}[section]
\newtheorem{lem}{Lemma}[section]
\newtheorem{prop}{Proposition}[section]
\newtheorem{cor}{Corollary}[section]
\newtheorem{conj}{Conjecture}[section]

\theoremstyle{definition}

\begin{document}
\title{Uniqueness of maximum three-distance sets \\
in the three-dimensional Euclidean space}

\author{Masashi Shinohara\\ 
Faculty of Education, Shiga University,  \\
Hiratsu 2-5-1, Shiga, 520-0862, Japan,\\
shino@edu.shiga-u.ac.jp}

\maketitle

\begin{center}
\textbf{Abstract}
\end{center}

A subset $X$ in the $d$-dimensional Euclidean space is called a $k$-distance set 
if there are exactly $k$ distances between two distinct points in $X$. 
Einhorn and Schoenberg conjectured that the vertices of the regular icosahedron is 
the only $12$-point three-distance set in $\mathbb{R}^3$ up to isomorphism. 
In this paper, we prove the uniqueness of $12$-point three-distance sets in $\mathbb{R}^3$.\\

\section{Introduction}
Let $\mathbb{R}^d$ be the $d$-dimensional Euclidean space and $S^{d-1}$ a sphere in $\mathbb{R}^d$. 
For $X\subset \mathbb{R}^d$, let $A(X)=\{PQ|P, Q\in X,P\ne Q\}$ where $PQ$ is the Euclidean distance 
between $P$ and $Q$ in $\mathbb{R}^d$. 
We call $X$ a $k$-distance set if $|A(X)|=k$. For two subsets in $\mathbb{R}^d$, we say that they are 
isomorphic if there exists a similar transformation from one to the other. 
An interesting problem on $k$-distance sets is to determine the 
largest possible cardinality of $k$-distance sets in $\mathbb{R}^d$. 
We denote this number by $g_d(k)$. 
E.\ Bannai-E.\ Bannai-D.\ Stanton \cite{Ban2} 
and A.\ Blokhuis \cite{Blo} gave an upper bound $g_d(k)\leq \binom{d+k}{k}$. 
A $k$-distance sets $X$ in $\mathbb{R}^d$ is maximum if $|X|=g_d(k)$.
We have an example of two-distance sets and three-distance sets from the vertex set of a regular simplex. 
Let $V_d$ be the vertex set of a regular simplex and $\widetilde{V}_d$ 
a set of all midpoint of two ditinct points in $V_d$. 
Then $\widetilde{V}_d$ is a two-distance set and $V_d\cup \widetilde{V}_d$ is a three-distance set.
Therefore $g_d(2)\geq \binom{d+1}{2}$ and $g_d(3)\geq \binom{d+2}{2}$. 
For $k=2$, the numbers $g_d(2)$ are known for $d\leq 8$ 
(L.\ M.\ Kelly \cite{Kelly}, H.\ T.\ Croft \cite{Croft} and 
P.\ Lison\v{e}k \cite{Liso}). 
For $d=2$, the numbers $g_2(k)$ are known and maximum $k$-distance sets are classified for $k\leq 5$ 
(P.\ Erd\H{o}s-P.\ Fishburn \cite{Erd1}, \cite{Erd2}, M.\ Shinohara \cite{Shino1}, \cite{Shino2}).
However, for $d\geq 3$ and $k\geq 3$, even the smallest case $g_3(3)$ had not been determined. 
In this case, S.\ J.\ Einhorn-I.\ J.\ Schoenberg \cite{Ein2} 
conjectured the following: 

\begin{conj}\label{conjecture}
The vertices of the regular icosahedron is the only 12-point three-distance set 
in $\mathbb{R}^3$. 
\end{conj}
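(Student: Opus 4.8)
The plan is to pin down the entire combinatorial distance pattern of an arbitrary $12$-point three-distance set and then invoke rigidity: once we know which pair of points realizes which of the three distances, realizability in $\mathbb{R}^3$ becomes a rank condition on a Gram matrix whose positive-semidefinite rank-$3$ completion is unique up to orthogonal transformation, so that $X$ must be similar to the icosahedron. Concretely, let $X=\{P_1,\dots,P_{12}\}$ be a three-distance set with distances $a<b<c$, and for each $P\in X$ record the triple $(n_a(P),n_b(P),n_c(P))$ counting the other points at each distance, so $n_a(P)+n_b(P)+n_c(P)=11$. The first key observation is local: for a fixed $r\in\{a,b,c\}$ the set $X_r(P)$ of points at distance exactly $r$ from $P$ lies on the $2$-sphere $S_r(P)$ of radius $r$ about $P$ and is itself a subset of $X$, hence an (at most) three-distance set on a $2$-sphere. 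Using the elementary relation $QR^2=2r^2\bigl(1-\cos\angle QPR\bigr)$ together with the classification of small spherical distance sets on $S^2$ (equilateral sets of size $\le 4$, two-distance sets of size $\le 6$), I would bound each $n_r(P)$ and restrict which distances occur inside $X_r(P)$. In particular, points at the maximum distance $c$ from $P$ pairwise subtend angles $\le 60^\circ$ at $P$, which should force $n_c(P)$ to be very small.

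Next I would pass from local to global by summing: $\sum_P n_c(P)=2e_c$, where $e_c$ is the number of pairs at the maximum distance. Since the diameter graph of $n$ points in $\mathbb{R}^3$ has at most $2n-2$ edges and avoids the configurations forbidden in three-space, I can control $e_c$, and combining this with the local bounds the aim is to show that the distribution $(n_a(P),n_b(P),n_c(P))$ is \emph{constant} over $P$. The target value is $(5,5,1)$: every point has a unique point at the maximal distance (so $c$ realizes a perfect matching of six antipodal pairs) and exactly five nearest and five ``middle'' neighbours. This regularity statement is what converts a loose metric hypothesis into the rigid incidence structure of the icosahedron.

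With the distribution fixed at $(5,5,1)$, I would analyze the nearest-neighbour graph $G_a$, in which every vertex has degree $5$. The spherical constraint on $X_a(P)$ — five points on $S_a(P)$ whose mutual distances lie in $\{a,b,c\}$, forming a small spherical set — should force $X_a(P)$ to be a regular pentagon, whence $G_a$ is the icosahedral graph and the full distance matrix of $X$ is determined up to the scaling $a:b:c$. The ratios $a:b:c$ are then pinned down by the requirement that the associated Gram matrix (relative to a base point, $M_{ij}=\tfrac12(PP_i^2+PP_j^2-P_iP_j^2)$) be positive semidefinite of rank $3$; this gives exactly the icosahedral ratios and, by uniqueness of the rank-$3$ PSD completion, a realization unique up to similarity.

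The hard part is the global regularity step: a priori $X$ need not lie on a common sphere, so the clean association-scheme machinery available on a sphere is not directly applicable, and the various local spherical bounds must be stitched together by careful counting. The main obstacle is excluding the many distance distributions and local neighbourhood types that are combinatorially admissible but not realizable in $\mathbb{R}^3$ — typically because their Gram matrix fails to be positive semidefinite of rank $\le 3$, or because the induced distance ratios violate the constraints on two-distance subsets. Establishing that the maximum distance behaves like a perfect matching and that every point is ``icosahedral'' is where essentially all of the work will concentrate.
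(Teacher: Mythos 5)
There is a genuine gap, and it sits exactly where you locate it yourself: the ``global regularity'' step. Nothing in the tools you cite forces the distribution $(n_a(P),n_b(P),n_c(P))$ to be constant, nor the diameter distance $c$ to realize a perfect matching. Your local bound is too weak: points at distance $c$ from $P$ subtend pairwise angles $\le 60^\circ$ at $P$, but this does not make $n_c(P)$ ``very small'' --- a regular tetrahedron of side $c$ already gives $n_c(P)=3$ with all angles exactly $60^\circ$, and clusters of points at mutual distance $a$ or $b$ on the sphere $S_c(P)$ are perfectly compatible with the constraint. Your global count fares no better: the sharp bound of $2n-2$ edges for diameter graphs in $\mathbb{R}^3$ gives $e_c\le 22$ for $n=12$, i.e.\ average diametral degree below $4$, which is nowhere near forcing $e_c=6$ with all degrees equal to $1$. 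Without the regularity step the Gram-matrix endgame is unusable, since one would have to enumerate all assignments of three distance classes to $\binom{12}{2}=66$ pairs and test each for a PSD rank-$\le 3$ realization --- an infeasible search, and precisely the combinatorial explosion the regularity claim was supposed to prevent. So the proposal is not a proof but a reduction of the theorem to an unproved statement that carries essentially all of its difficulty.

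For contrast, the paper's actual mechanism for taming the diameter graph is quite different and is worth knowing: instead of trying to pin the diametral degrees down to $1$, it only needs the independence number. By Dolnikov's theorem, two odd cycles in a diameter graph in $\mathbb{R}^3$ must share a vertex; a Ramsey-type case analysis (Lemma 2.2 and Proposition 3.2, using $R(3,4)=9$) shows that any triangle-free graph on $12$ vertices with $\alpha\le 4$ contains two vertex-disjoint $5$-cycles, which is forbidden, while a triangle forces a bipartite complement by Lemma 2.1. Hence $\alpha(DG(X))\ge 5$, so $X$ contains a five-point two-distance set; these were classified by Einhorn--Schoenberg into $27$ families, and a finite extension computation (Lemma 4.1 shows each non-pentagon $Y$ admits only finitely many compatible extension points) completes the uniqueness. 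The lesson is that the paper converts the metric rigidity question into a small, fully classified subconfiguration plus a finite search, whereas your route demands a global combinatorial regularity that the available local and counting arguments do not deliver.
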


The author proved the following (\cite{shino3}):  

\begin{thm} 
(i) There are no 14-point three-distance sets in $\mathbb{R}^3$. Thus $g_3(3)=12 \text{ or } 13$. \\
(ii) The vertices of the regular icosahedron is the only 12-point three-distance set 
in $\mathbb{S}^2$. 
\end{thm}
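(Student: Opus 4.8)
I would treat the two parts by different methods.

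For part (i) I argue by contradiction: suppose $X\subset\mathbb{R}^3$ is a three-distance set with $|X|=14$ and $A(X)=\{a,b,c\}$, $a<b<c$. Fixing a point $P\in X$, I partition the other $13$ points by their distance to $P$ into $X_a,X_b,X_c$, each lying on a concentric $2$-sphere. All mutual distances inside $X_r$ lie in $\{a,b,c\}$, and a pair in $X_r$ can realize a value $t$ only if $t\le 2r$; hence for the smaller radii the admissible distance set shrinks, and the spherical distance-set bounds (Delsarte--Goethals--Seidel) force $|X_a|$ and $|X_b|$ to be small. The genuinely hard term is the outer shell $|X_c|$, where all three distances may persist. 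Here I combine the local bound with a global constraint: writing $A_1,A_2,A_3$ for the adjacency matrices of the three distance-graphs (so $A_1+A_2+A_3=J-I$), the Euclidean distance matrix $D=a^2A_1+b^2A_2+c^2A_3$ becomes, after centering by $I-\tfrac{1}{14}J$, negative semidefinite of rank at most $3$. The aim is to show that no valency vector $(|X_a|,|X_b|,|X_c|)$ compatible with these constraints can occur at every vertex while summing to $13$, contradicting $|X|=14$. Ruling out $14$-point three-distance sets forces $g_3(3)\le 13$, since a larger three-distance set would contain a $14$-point subset that is again a three-distance set (a $14$-point set in $\mathbb{R}^3$ cannot have fewer than three distances); with $g_3(3)\ge 12$ from the icosahedron, this gives $g_3(3)\in\{12,13\}$.

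For part (ii) I pass to the Gram matrix. Scaling $X\subset\mathbb{S}^2$ to unit vectors, the inner products $\langle u,v\rangle$ with $u\ne v$ take three values $\alpha>\beta>\gamma$, and the Gram matrix is $G=I+\alpha A_1+\beta A_2+\gamma A_3$, positive semidefinite of rank $3$ (the $12$ points cannot lie in a plane), so $0$ is an eigenvalue of multiplicity $9$. The decisive feature on $\mathbb{S}^2$ is that the neighbours of a fixed point at a fixed distance lie on a \emph{circle}, so each $X_r$ is a planar distance set and is sharply bounded. Using this local rigidity together with the rank-$3$ condition, I would first prove that $X$ is distance-regular, i.e.\ every vertex has common valencies $(k_1,k_2,k_3)$ with $k_1+k_2+k_3=11$. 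The eigenvalue equations imposed by rank $3$ then single out $(k_1,k_2,k_3)=(5,5,1)$, the icosahedral distance distribution, and one checks that the three distance-graphs form the metric association scheme with the icosahedron's intersection numbers. The antipodal part ($k_3=1$) exhibits $X$ as six antipodal pairs, and the two pentagonal shells at distances $a$ and $b$ around each vertex, glued by the forced intersection numbers, reconstruct the coordinates up to an orthogonal transformation, giving the regular icosahedron.

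The main obstacle in part (i) is the case analysis around the outer shell: the spherical bounds alone leave several candidate valency vectors summing to $13$, and excluding each demands a separate argument combining triangle-type inequalities among $a,b,c$ with non-realizability in $\mathbb{R}^3$. In part (ii) the crux is establishing distance-regularity \emph{without} assuming it: a priori the valencies could vary from vertex to vertex, so one must exploit the rank being exactly $3$ --- equivalently the $9$-dimensional kernel of $G$ --- very tightly to force uniformity before the parameters can be pinned down and the scheme recognised.
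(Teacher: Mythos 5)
You have produced a programme rather than a proof: in both parts the pivotal step is explicitly deferred, and the supporting claims that would have to carry it are either unproved or too weak. In part (i), the assertion that spherical distance-set bounds ``force $|X_a|$ and $|X_b|$ to be small'' does not hold in the form you need: the Delsarte--Goethals--Seidel bound for a three-distance set on $S^2$ is $\binom{5}{3}+\binom{4}{2}=16$, which constrains nothing when the shells partition only $13$ points, and your premise that the admissible distance set shrinks on inner shells fails unless $b>2a$ (nothing prevents $b,c\le 2a$; the icosahedron itself realizes all three distances among points on a single shell around a vertex). The sentence ``the aim is to show that no valency vector \ldots can occur'' is exactly the hard content, and you give no mechanism converting the global rank-$\le 3$ positive-semidefiniteness of the centered distance matrix into per-vertex valency exclusions; as written, an enormous case space remains unexamined. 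In part (ii) you openly assume the crux: distance-regularity. A $12$-point three-distance set in $\mathbb{S}^2$ is not a priori distance-regular, the $9$-dimensional kernel of the Gram matrix does not visibly force equal valencies at every vertex, and ``I would first prove that $X$ is distance-regular'' names the problem instead of solving it. (The one fully sound step is the closing inference in (i): since $g_3(2)=6$, any $14$-point subset of a larger three-distance set is again a three-distance set, so non-existence at $14$ points does give $g_3(3)\le 13$.)

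For contrast, the route this paper takes (Theorem \ref{sub}, with details in the cited thesis) is combinatorial and avoids both of your sticking points. One passes to the diameter graph $G=DG(X)$: Lemma \ref{triangle} shows that deleting a triangle from $G$ leaves a bipartite graph, so if $G$ contains $K_3$ then $\alpha(G)\ge\lceil (n-3)/2\rceil$; if instead $G$ is $K_3$-free on $14$ vertices, the Ramsey number $R(3,5)=14$ yields $\alpha(G)\ge 5$ directly. An independent set in the diameter graph drops the largest distance, producing a five-point two-distance subset, which settles (i) because Einhorn and Schoenberg classified all five-point two-distance sets in $\mathbb{R}^3$, making the extension problem finite and checkable. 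For (ii), when $G$ is $K_3$-free with $\alpha(G)<5$, Lemma \ref{12-point} produces a vertex of degree $4$ whose neighbourhood lies on a circle and forms a four-point two-distance set in $\mathbb{S}^1$, and again classification plus finite checking finishes the argument. Your algebraic scheme, if it could be completed, would be attractive precisely because it might avoid the case analysis and computation; but to repair it you would need, at minimum, a genuine derivation of the valency exclusions in (i) and a proof of distance-regularity in (ii), neither of which the rank conditions you cite are shown to supply.
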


In this paper, we prove the following: 

\begin{thm}\label{main}
The vertices of the regular icosahedron is the only 12-point three-distance set 
in $\mathbb{R}^2$. In particular, $g_3(3)=12$. 
\end{thm}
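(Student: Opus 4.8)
The plan is to combine the two results quoted above. By part (i) we already know $g_3(3)\in\{12,13\}$, so it suffices to carry out two tasks: (A) show that every $12$-point three-distance set in $\mathbb{R}^3$ lies on a sphere, after which part (ii) immediately identifies it with the regular icosahedron and settles the uniqueness claim; and (B) show that no $13$-point three-distance set exists, which yields $g_3(3)=12$.

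Task (B) is short once (A) is available. Let $X$ be a hypothetical $13$-point three-distance set and let $p\in X$. The $12$-point subset $X\setminus\{p\}$ cannot be a one- or two-distance set, since $g_3(2)\le\binom{5}{2}=10<12$; hence it is a genuine three-distance set, and by (A) together with (ii) it is (congruent to) the vertex set $I$ of a regular icosahedron with center $O$, circumradius $R$ and three distances $d_1<d_2<d_3$. Writing $\rho=|pO|$ and $\theta_v=\angle(p-O,v-O)$ for $v\in I$, we have $|pv|^2=\rho^2+R^2-2\rho R\cos\theta_v$, so distinct values of $\cos\theta_v$ produce distinct distances $|pv|$ when $\rho>0$. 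Since the orthogonal projection of the twelve icosahedron vertices onto any line through $O$ takes at least four distinct values (the minimum, four, being attained only along the five- and three-fold axes), the point $p$ is at four or more distinct distances from $I$ when $\rho>0$, while $p=O$ gives the single distance $R\notin\{d_1,d_2,d_3\}$. Either way $X$ would have at least four distances, a contradiction; hence there is no $13$-point set and $g_3(3)=12$.

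Task (A) is the heart of the argument and the step I expect to be hardest. First, since a planar three-distance set has at most $g_2(3)=7$ points, a $12$-point three-distance set $X$ affinely spans $\mathbb{R}^3$. Fix a point $P\in X$; the remaining eleven points are distributed over the at most three concentric spheres $\Sigma_i=\{Q:|PQ|=d_i\}$, say with $|X\cap\Sigma_i|=m_i$ and $m_1+m_2+m_3=11$. Each layer $X\cap\Sigma_i$ lies on a $2$-sphere and is itself a set with at most three distances, so its size is controlled by the two-distance bound $g_3(2)\le 10$ and by the sphere classification in (ii); moreover $\{P\}\cup(X\cap\Sigma_i)$ must remain a three-distance set, which couples the admissible layer sizes across different base points $P$. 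The goal is to show that these constraints force every vertex to carry the icosahedral degree pattern $(5,5,1)$ in the three distance-graphs $G_1,G_2,G_3$ (edges joining points at distance $d_1,d_2,d_3$ respectively), and in particular that the longest-distance graph $G_3$ is a perfect matching whose six pairs share a common midpoint $O$, so that $X$ lies on the sphere about $O$.

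The \textbf{main obstacle} is the combinatorial case analysis needed to complete Task (A): one must enumerate the possible triples $(m_1,m_2,m_3)$ at each point, rule out every distribution incompatible with a common circumscribed sphere, and verify that the local spherical layers glue consistently into a single global configuration. This is where essentially all of the work lies; by contrast, once a common center is produced the spherical reduction is immediate, and then (ii) and the projection argument of Task (B) finish the proof with no further computation.
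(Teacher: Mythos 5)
There is a genuine gap, and it sits exactly where you placed your bets: your Task (A) is not a proof but a restatement of the theorem. You assert that the layer counts $(m_1,m_2,m_3)$ with $m_1+m_2+m_3=11$, the bound $g_3(2)\leq 10$, and the sphericity of each layer will force every vertex to have the icosahedral degree pattern $(5,5,1)$ and force the longest-distance graph to be a perfect matching of six pairs with a common midpoint $O$ --- but no mechanism is offered, and these constraints are far too weak to deliver that conclusion: they admit a large zoo of a priori degree patterns, and nothing in the bookkeeping produces a \emph{common} circumscribed sphere (the concentric spheres $\Sigma_i$ vary with the base point $P$, and the ``coupling'' you mention is never made concrete). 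Establishing even the matching structure of the diameter graph requires genuine geometric input about diameters in $\mathbb{R}^3$, which is precisely the machinery the paper builds; notably, the paper never proves sphericity at all and does not route the $\mathbb{R}^3$ case through the spherical uniqueness result in the way you propose.

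The paper's actual argument fills this hole differently. It first proves (Theorem \ref{sub1}, via Proposition \ref{ind}) that every $12$-point three-distance set $X\subset\mathbb{R}^3$ contains a five-point two-distance subset: using $R(3,4)=9$ to control degrees (Lemma \ref{12-point}), it shows that any triangle-free graph on $12$ vertices with independence number less than $5$ must contain two vertex-disjoint $5$-cycles, which is impossible for a diameter graph in $\mathbb{R}^3$ by Dolnikov's theorem (Corollary \ref{cycle}); the triangle case is handled by Proposition \ref{independent}. It then invokes the Einhorn--Schoenberg classification of the $27$ five-point two-distance sets in $\mathbb{R}^3$, proves the candidate set $Candi(Y)$ of admissible extension points is finite for $Y\neq R_5$ (Lemma \ref{candidates}), and finishes by a finite, computer-assisted enumeration (Tables 1 and 2), with $R_5$ treated separately via the dodecahedron analysis of the two point types. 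Your Task (B) projection argument is a pleasant elementary way to exclude $13$-point sets \emph{once} uniqueness at $12$ points in $\mathbb{R}^3$ is known (though the claim that every line through $O$ sees at least four distinct projection values of the icosahedron's vertices itself needs a short verification), but it cannot rescue the proposal, since it presupposes exactly the unproven Task (A).
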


In section 2, we introduce diameter graphs and characterize the diameter graphs 
for finite subsets in three-dimensional Euclidean space. 
Our goal of section 2 is to prove the following: 

\begin{thm}\label{sub}
(i) Every 14-point three-distance set in $\mathbb{R}^3$ contains a five-point two-distance set in $\mathbb{R}^3$. \\
(ii) Every 12-point three-distance set in $\mathbb{S}^2$ contains a five-point two-distance set in $\mathbb{S}^2$ 
or a four-point two-distance set in $\mathbb{S}^1$. 
\end{thm}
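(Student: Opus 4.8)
The plan is to analyze the \emph{diameter graph} $G_c=(X,E_c)$ of the three-distance set, where $a<b<c$ are the three distances and $E_c=\{PQ:PQ=c\}$ records the pairs realizing the largest distance. For any point $P\in X$ the remaining points split into three spherical shells $X_a(P),X_b(P),X_c(P)$ according to their distance $a$, $b$, or $c$ from $P$; each $X_r(P)$ lies on the sphere $S(P,r)$ of radius $r$ about $P$ and, as a subset of $X$, realizes only distances in $\{a,b,c\}$. My target is a point $P$ and a radius $r$ for which $|X_r(P)|\ge 5$ \emph{and} $X_r(P)$ realizes only two of the three distances; such an $X_r(P)$ is exactly the desired five-point two-distance set.

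First I would invoke the classical bound on diameter graphs in $\mathbb{R}^3$ (the V\'azsonyi bound $|E_c|\le 2n-2$, due to Gr\"unbaum, Heppes and Straszewicz) together with the structural description of $G_c$ developed in Section~2. Since $\binom{14}{2}=91$ and $|E_c|\le 26$, we obtain $|E_a|+|E_b|\ge 65$, so one of the two smaller distances, say $a$, satisfies $|E_a|\ge 33$; averaging $2|E_a|\ge 66$ over the $14$ vertices produces a point $P$ with $|X_a(P)|\ge 5$. The points of $X_a(P)$ lie on $S(P,a)$ and are pairwise within distance $2a$ by the triangle inequality, which already excludes the distance $c$ whenever $c>2a$ and hence makes $X_a(P)$ a two-distance set. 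The same averaging applied to $G_c$ gives a complementary route: a vertex $P$ of large $c$-degree whose neighborhood $X_c(P)=N_{G_c}(P)$ is a two-distance set precisely when $P$ lies in no triangle of $G_c$, since two neighbours at mutual distance $c$ are the only way to reintroduce the largest distance inside the shell.

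The main obstacle is to cover the remaining regime, where no single shell is simultaneously large and two-distance: this occurs when $c\le 2a$, so that triples in $X_a(P)$ can span distance $c$, and, on the diameter-graph side, when every high-degree vertex lies in equilateral $c$-triangles. Here I would lean on the Section~2 characterization to control how diameter-triangles overlap and how the three shells interact across a fixed edge, reducing the configuration to finitely many local patterns. In each pattern one argues either that the concentric-sphere constraint in $\mathbb{R}^3$ forces a coincidence of two distances on a shell of size at least five, or else that the forced edge count contradicts $|E_c|\le 2n-2$. I expect this case analysis---matching the combinatorics of the diameter graph to the rigidity of points constrained to concentric spheres---to be the technical heart of the argument.

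Part (ii) follows the same scheme on $\mathbb{S}^2$ with $n=12$. Now $\binom{12}{2}=66$ and $|E_c|\le 2\cdot 12-2=22$, so $|E_a|+|E_b|\ge 44$ and the averaging guarantees only a shell of size four rather than five. This is exactly why the conclusion of (ii) admits a four-point two-distance set: when the critical neighborhood $X_r(P)$ has just four points, the extra rigidity of all points lying on one sphere tends to force them onto a common small circle $\mathbb{S}^1\subset\mathbb{S}^2$, where they already constitute a four-point two-distance set. I would therefore split the spherical argument according to whether the extremal shell spans a genuine two-sphere---pushing its size to five by a finer analysis of the triangle structure---or collapses onto a circle, yielding the four-point two-distance set on $\mathbb{S}^1$.
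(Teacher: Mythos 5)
You have identified the right object (the diameter graph), but your proposal leaves the actual content of the theorem unproven. The averaging step (V\'azsonyi bound plus degree counting) only produces a point $P$ with $|X_a(P)|\ge 5$, and you yourself note that this shell is a two-distance set only when $c>2a$; in the complementary regime $c\le 2a$ you defer to an unspecified case analysis over ``finitely many local patterns,'' which is precisely the heart of the matter and is never carried out. The shell strategy is also structurally too restrictive: a five-point two-distance subset need not lie on a sphere about any point of $X$, and insisting that it arise as a single shell $X_r(P)$ is exactly what forces you into the hard regime. The paper's proof of (i) is instead a short combination of Ramsey theory with one geometric fact: writing $G=DG(X)$, any independent set in $G$ avoids the distance $c$ and is therefore automatically a two-distance set (five points in $\mathbb{R}^3$ cannot be equilateral, so exactly two distances occur); if $G$ contains a triangle, Lemma \ref{triangle} shows that $G$ minus the triangle is bipartite, whence $\alpha(G)\ge\lceil 11/2\rceil=6$, while if $G$ is $K_3$-free then $R(3,5)=14$ gives $\alpha(G)\ge 5$ directly. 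No edge counting, no V\'azsonyi bound, and no metric case distinction is needed.

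For (ii) your sketch has a second gap: a size-4 shell $X_a(P)$ on $\mathbb{S}^2$ does lie on a circle (it is the intersection of two spheres, so ``tends to force'' is unnecessary hedging on that point), but nothing in your argument prevents its four points from realizing all three distances among themselves, so it need not be a two-distance set. The paper again argues through the diameter graph: assuming $G$ is $K_3$-free with $\alpha(G)<5$ (otherwise one concludes as in (i) with a five-point two-distance set in $\mathbb{S}^2$), Lemma \ref{12-point} --- proved using $R(3,4)=9$ --- produces a vertex $O$ of degree 4; its four neighbors lie on the circle in which $\mathbb{S}^2$ meets the sphere of radius $D(X)$ about $O$, i.e.\ on an $\mathbb{S}^1$, and $K_3$-freeness guarantees that no two of them are at distance $D(X)$ from each other, so they form a four-point two-distance set in $\mathbb{S}^1$. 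The device missing from both halves of your proposal is that the desired two-distance set should be an \emph{independent set} (or a triangle-free neighborhood) in the diameter graph, which kills the third distance combinatorially, rather than a metric shell, which does not.
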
 

In section 3, we improve this theorem by using a Dolnicov's result and prove the following:  

\begin{thm}\label{sub1}
Every 12-point three-distance set in $\mathbb{R}^3$ contains a five-point two-distance set in $\mathbb{R}^3$. 
\end{thm}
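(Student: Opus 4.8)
The plan is to study the \emph{diameter graph} of a $12$-point three-distance set $X\subset\mathbb{R}^3$. Write the three distances as $a<b<c$, so that $c$ is the diameter of $X$ (every pair is at distance at most $c$), and let $G$ be the graph on the vertex set $X$ whose edges are exactly the pairs at distance $c$. The reason for passing to $G$ is that a set of five vertices that is \emph{independent} in $G$ realizes only the distances $a$ and $b$; since no five points of $\mathbb{R}^3$ can be mutually equidistant, such a set must realize both of $a,b$ and hence is automatically a five-point two-distance set. It therefore suffices to prove $\alpha(G)\ge 5$, i.e.\ that $G$ has an independent set of size $5$.

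To bound $\alpha(G)$ from below I would invoke two features of three-dimensional diameter graphs. First, the geometric constraints recorded in Section~2: $G$ has at most $2\cdot 12-2=22$ edges, and every clique of $G$ is a set of mutually equidistant points, hence has size at most $4$ (a regular tetrahedron, with no fifth point available in $\mathbb{R}^3$); in particular $G$ contains no $K_5$. Second, Dolnicov's theorem: any two odd cycles of a three-dimensional diameter graph share a vertex, so $G$ has \emph{no two vertex-disjoint odd cycles}.

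The heart of the argument is to convert ``no two disjoint odd cycles'' into ``nearly bipartite.'' By the Lov\'asz-type structure theorem for graphs without two vertex-disjoint odd cycles—after the few dense exceptional graphs (headed by $K_5$) have been discarded using the clique bound above—there is a set $S\subseteq X$ with $|S|\le 3$ such that $G-S$ is bipartite. Then $G-S$ is a bipartite graph on at least $9$ vertices, so its larger colour class is an independent set of $G$ of size at least $\lceil 9/2\rceil=5$. Unwinding the reduction, these five points span only $a$ and $b$ and so form the required five-point two-distance set in $\mathbb{R}^3$. This simultaneously drops the spherical hypothesis and eliminates the unwanted ``four points on $\mathbb{S}^1$'' alternative of Theorem~\ref{sub}, which is precisely how it improves that statement.

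The step I expect to be the main obstacle is the structural dichotomy of the previous paragraph, and in particular its sharp constant: my independent-set extraction needs $|S|\le 3$ exactly (with $9$ surviving vertices yielding $5$, whereas $8$ would yield only $4$). So I must confirm that every graph with no two disjoint odd cycles and no $K_5$ that can also occur as a diameter graph of $12$ points in $\mathbb{R}^3$ really admits an odd-cycle transversal of size at most $3$. The clique bound eliminates $K_5$ at once, and the edge bound $|E(G)|\le 22$—which forces average degree below $4$—together with the explicit list of admissible diameter graphs from Section~2 should rule out the remaining, necessarily dense or highly connected, exceptions. Making this exclusion rigorous, rather than the independent-set extraction itself, is where the real work lies.
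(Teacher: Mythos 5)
Your overall frame coincides with the paper's: pass to the diameter graph $G$, observe that an independent $5$-set realizes only the two smaller distances (and cannot be equilateral in $\mathbb{R}^3$), and use Dolnicov's theorem that two odd cycles of a diameter graph in $\mathbb{R}^3$ must share a vertex. The gap is exactly the step you flag as ``the real work,'' and it does not close as you sketch it. The Lov\'asz-type characterization of graphs with no two vertex-disjoint odd cycles is not ``$K_5$ and a few dense exceptions, else an odd-cycle transversal $S$ with $|S|\le 3$'': its correct form (and only under an internal $4$-connectivity hypothesis; lower connectivity forces an additional decomposition argument) has a third outcome, namely graphs embeddable in the projective plane with all faces even. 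That exceptional class is infinite and survives both of your pruning tools: by Euler's formula a quadrangulation of the projective plane on $n$ vertices has exactly $2n-2$ edges, i.e.\ exactly $22$ edges on $12$ vertices, so it meets rather than violates your edge bound; and such graphs can be triangle-free (the M\"obius ladder on $12$ vertices is a non-bipartite projective-planar quadrangulation of girth $4$), so the clique bound is vacuous. You give no argument excluding even-faced projective-planar diameter graphs, so the pivotal claim $|S|\le 3$ is unproven. A side inaccuracy: the $2n-2$ edge bound (Gr\"unbaum--Heppes--Straszewicz) and an ``explicit list of admissible diameter graphs'' are not in the paper's Section 2; you would have to import those separately.

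For comparison, the paper avoids the structure theorem entirely. If $G$ contains a triangle $T$, then since no odd cycle can be disjoint from $T$, the graph $G-T$ is bipartite and $\alpha(G)\ge\lceil 9/2\rceil=5$; this is your $|S|=3$ case, obtained directly. The substantive case is $G$ triangle-free with $\alpha(G)\le 4$: Lemma \ref{12-point} (using $R(3,4)=9$) forces every degree to be $3$ or $4$ with some vertex of degree $4$, and then the four-case analysis A--D in the proof of Proposition \ref{ind} constructs, by hand, two vertex-disjoint $5$-cycles in any such graph, contradicting Corollary \ref{cycle}. In other words, the statement you would need to patch your argument --- that every $12$-vertex graph with no two disjoint odd cycles has $\alpha\ge 5$ --- is precisely what the paper proves by explicit case analysis, and citing the structure theorem does not substitute for that work.
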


Since five-point two-distance sets in $\mathbb{R}^3$ are classified, 
we have Theorem \ref{main} by using computer calculations. We give the details of these calculations in section 3. 

\section{Diameter graphs} 

Let $G=(V,E)$ be a simple graph, where $V=V(G)$ and $E=E(G)$ are the vertex set and the edge set of $G$, respectively. 
We denote a path and a cycle with $n$ vertices by $P_n$ and $C_n$, respectively. 
We denote a complete graph of order $n$ by $K_n$.
For a vertex $v\in V(G)$, $\Gamma _i(v)=\{w\in V(G)|d(v,w)=i\}$, where $d(v,w)$ is the distance between $v$ and $w$. 
We abbreviate $\Gamma (v)=\Gamma _1(v)$. 
A subset $H$ of $V(G)$ is an {\bf independent set} of $V(G)$ if no two vertices in $H$ are adjacent. 
The {\bf independence number} $\alpha (G)$ of a graph $G$ 
is the maximum cardinality among the independent sets of $G$. 
The {\bf Ramsey number} $R(s,t)$ is the smallest value of $n$ for which 
every red-blue coloring of $K_n$ yields a red $K_s$ or a blue $K_t$. 
For example, Ramsey numbers $R(3,t)$ are known as $R(3,3)=6$, $R(3,4)=9$, $R(3,5)=14$. 

For $X\subset \mathbb{R}^d$, the diameter of $X$ is defined by $D(X)=\max A(X)$. 
Diameters give us important information when we study distance sets 
in particular in few dimensional space. 
The diameter graph $DG(X)$ for $X\subset \mathbb{R}^d$ is the graph 
with $X$ as its vertices and where two vertices $P, Q\in X$ are 
adjacent if $PQ=D(X)$. Let $R_n$ be the set of the vertices of a regular $n$-gon. 
Clearly $DG(R_{2n+1})=C_{2n+1}$ and $DG(R_{2n})=n\cdot P_2$. 
Note that if $\alpha (DG(X))=n'$ for a $k$-distance set $X$, 
then $X$ contains an $n'$-point $k'$-distance set for some $k'<k$. 

The diameter graph $G=DG(X)$ of $X\subset \mathbb{R}^2$ does not contain $C_4$ and 
if $G$ contains $C_3$, then any two vertices in $V(G)\setminus V(C_3)$ are not adjacent. 
For diameter graphs for $\mathbb{R}^2$, we have the following propositions (\cite{Shino2}). 

\begin{prop}\label{graph} 
Let $G=DG(X)$ for $X\subset \mathbb{R}^2$. Then 

(i) $G$ contains no $C_{2k}$ for any $k\geq 2$; 

(ii) if $G$ contains $C_{2k+1}$, 
then any two vertices in $V(G)\setminus V(C_{2k+1})$ are not adjacent and 
every vertex not in the cycle is adjacent to at most one vertex of the cycle. 

In particular, $G$ contains at most one cycle. 
\end{prop}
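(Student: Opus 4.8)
The plan is to base everything on a single geometric fact, the \emph{crossing lemma}: two edges of $G=DG(X)$ that share no endpoint must cross as segments. To prove it I take diameters $PQ$ and $RS$ with four distinct endpoints and assume they are disjoint. The four points cannot be positioned with one inside the triangle of the other three, since an interior point of a triangle whose sides are all $\le D$ is at distance $<D$ from each of its vertices and so cannot be the endpoint of a diameter; hence the four points are in convex position. As $PQ$ and $RS$ do not cross, they form a pair of opposite sides of the convex quadrilateral, and the elementary inequality ``the sum of the diagonals exceeds the sum of either pair of opposite sides'' forces a diagonal to exceed $PQ=RS=D$, contradicting $D(X)=D$. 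I would also record the companion fact that the endpoints of any diameter are extreme points of $X$ (a non-extreme point is a convex combination of others, and then the triangle inequality bounds its farthest distance strictly below $D$); thus every vertex of positive degree lies on the convex hull of $X$. In particular the vertices of any cycle are in convex position, and two cycle edges meet exactly when they are non-adjacent, adjacent ones merely sharing a vertex.

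For (i), suppose $G$ contains $C_{2k}$ with $k\ge 2$, written $u_1u_2\cdots u_{2k}u_1$ with the $u_i$ on the hull, and fix one edge $e=u_1u_2$ with its line $\ell$. Each edge $u_iu_{i+1}$ with $3\le i\le 2k-1$ is non-adjacent to $e$, hence crosses $e$, hence has its endpoints on opposite sides of $\ell$; so $u_3,u_4,\dots,u_{2k}$ strictly alternate sides of $\ell$. Being an even number $2k-2$ of vertices, exactly $k-1$ lie on each side, i.e.\ on each of the two arcs into which $u_1u_2$ cuts the hull. Thus, in the cyclic order of the $2k$ cycle-vertices, $u_1$ and $u_2$ are ``antipodal'' (separated by $k-1$ vertices on each side). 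Applying this to every edge shows that each of the $2k$ cycle edges is one of the $k$ antipodal chords $\{z_i,z_{i+k}\}$ of the $2k$ hull points; but a cycle needs $2k$ distinct edges while only $k$ such chords exist, a contradiction. Hence no even cycle occurs.

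For (ii), assume $G$ contains an odd cycle $C=C_{2k+1}$, a closed curve $\Gamma$ of $2k+1$ diameters lying inside the hull. \textbf{No edge joins two off-cycle vertices:} if $u,v\notin C$ and $uv=D$, then $uv$ shares an endpoint with no cycle edge, so by the crossing lemma the segment $uv$ crosses all $2k+1$ edges of $\Gamma$ transversally; yet $u,v$ are hull vertices, so the two rays of the line through $uv$ beyond $u$ and beyond $v$ leave the hull and, by convexity, never return, missing $\Gamma$. A line meets the closed curve $\Gamma$ an even number of times, but here it does so only in the $2k+1$ points on the segment $uv$, an odd number, a contradiction. \textbf{No off-cycle vertex meets two cycle vertices:} if $v\notin C$ with $vc_a=vc_b=D$, take the even-length arc of $C$ between $c_a$ and $c_b$ (one arc is even, since $2k+1$ is odd) and close it with $vc_a,vc_b$; this is a simple cycle of even length $\ge 4$, contradicting (i). These two facts yield ``at most one cycle'': a second cycle $C'$ would either contain an edge between two off-$C$ vertices, or have an off-$C$ vertex with two neighbours on $C$, or (if $V(C')\subseteq V(C)$) use a chord of $C$ splitting the odd cycle into two cycles one of which is even---each excluded by the above or by (i).

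The crossing lemma and the convex-position remark are routine; the delicate steps are the two parity arguments. In (i) one must be certain the alternation is exact---that no cycle vertex lies on $\ell$ (a non-generic collinearity of three hull points, to be excluded or perturbed away)---so that the even split into $k-1,k-1$ is forced. In (ii) the main obstacle is the topological bookkeeping of the line--curve parity: one needs the $2k+1$ meetings of $uv$ with $\Gamma$ to be genuine transversal crossings (guaranteed by the crossing lemma, since two equal diameters with no common endpoint cannot be collinear), and must separately dispose of the degenerate case where the line $uv$ passes through a vertex of $\Gamma$. Making this parity count airtight, rather than the geometry of the crossing lemma itself, is where the real work lies.
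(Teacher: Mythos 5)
Your proof is correct and follows exactly the route the paper indicates: the paper states this proposition without proof (citing \cite{Shino2}) and remarks only that it is ``implied from the fact that two segments with the diameter must cross if they do not share an end point,'' which is precisely your crossing lemma. The degenerate cases you flag at the end (a cycle vertex on the line $\ell$, non-transversal meetings) are in fact already excluded by the extreme-point observation you recorded, since three extreme points cannot be collinear, so no perturbation argument is needed.
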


\begin{prop}\label{independent} 
Let $G=DG(X)$ be the diameter graph for $X\subset \mathbb{R}^2$ with $|X|=n$. 
If $G\ne C_{n}$, then we have $\alpha(G)\geq \lceil \frac{n}{2} \rceil$. 
\end{prop}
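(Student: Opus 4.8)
The plan is to exploit the rigid structure of $G=DG(X)$ supplied by Proposition \ref{graph}: $G$ contains at most one cycle, and if it contains a cycle then that cycle is odd, say $C_{2k+1}$, while every vertex off the cycle is nonadjacent to all other off-cycle vertices and joined to at most one cycle vertex. Consequently every non-cycle vertex has degree at most $1$, so $G$ is exactly a (possibly absent) odd cycle $C_{2k+1}$ carrying some pendant leaves, together with some isolated vertices; in particular there are no further edges, and no separate nontrivial component can occur. I would split the argument into the acyclic case and the unicyclic case.

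First, if $G$ is a forest, then $G$ is bipartite, and the larger side of a bipartition is an independent set of size at least $\lceil n/2\rceil$, which settles this case immediately.

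Second, suppose $G$ contains the odd cycle $C_{2k+1}$. Since $G\neq C_n$, not all vertices lie on the cycle, so at least one external (pendant or isolated) vertex exists. Write $m_0$ for the number of isolated vertices, and for each cycle vertex $v_i$ let $p_i\ge 0$ be the number of pendants attached to it, so that $m_1=\sum_i p_i$ and $n=(2k+1)+m_1+m_0$. I would build an independent set by taking all $m_0$ isolated vertices, all $m_1$ pendants (mutually nonadjacent by Proposition \ref{graph}), and a maximum independent set $I_T$ among the cycle vertices carrying no pendant. Letting $S=\{i:p_i\ge 1\}$ and $|T|=(2k+1)-|S|$, deleting $S$ from the cycle leaves a disjoint union of paths on $|T|$ vertices, whence $|I_T|\ge \lceil |T|/2\rceil \ge |T|/2$. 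These three pieces are pairwise independent, so $\alpha(G)\ge m_0+m_1+|T|/2$, and using $m_1\ge|S|$ a direct computation gives $m_0+m_1+|T|/2 \ge n/2$; since $\alpha(G)$ is an integer this yields $\alpha(G)\ge\lceil n/2\rceil$.

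The main obstacle is the boundary behaviour of the odd cycle. Because $\alpha(C_{2k+1})=k$ falls one short of $\lceil(2k+1)/2\rceil$, the bound is tight only when the external vertices compensate, and the path-splitting estimate $|I_T|\ge\lceil |T|/2\rceil$ is valid only when $S\neq\emptyset$; the degenerate case $S=\emptyset$ (no pendants at all, so necessarily $m_0\ge 1$) must be checked separately, where $\alpha(G)=k+m_0\ge\lceil n/2\rceil$ holds precisely because $m_0\ge 1$. Verifying that the hypothesis $G\neq C_n$ is exactly what excludes the single configuration ($n=2k+1$, $G=C_n$) violating the inequality is the crux of the argument.
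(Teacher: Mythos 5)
Your proof is correct, but note that the paper itself offers nothing to compare it against: Proposition \ref{independent} is stated with a citation to \cite{Shino2} and no argument is reproduced here, so your write-up is in effect supplying the missing proof. Your route is the natural one given the structural Proposition \ref{graph}: since even cycles are forbidden and at most one (odd) cycle can occur, with all off-cycle vertices pairwise nonadjacent and tied to at most one cycle vertex, $G$ is either a forest (where the larger side of a bipartition already gives $\lceil n/2\rceil$) or an odd cycle decorated with pendants and isolated vertices. Your accounting in the unicyclic case is sound: with $S$ the set of cycle vertices carrying pendants, the estimate $\alpha(G)\ge m_0+m_1+\lceil |T|/2\rceil$ together with $m_1\ge |S|$ gives $\alpha(G)\ge n/2$, and integrality lifts this to $\lceil n/2\rceil$; the check that $m_0+m_1-|S|\ge 0$ is exactly where the slack lives. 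You also correctly isolated the one genuinely delicate point, namely that the path-splitting bound $|I_T|\ge\lceil |T|/2\rceil$ fails when $S=\emptyset$ (a bare odd cycle has $\alpha(C_{2k+1})=k$, one short of the ceiling), and that the hypothesis $G\ne C_n$ forces $m_0\ge 1$ in that degenerate case, which restores the bound via $m_0\ge\lceil(1+m_0)/2\rceil$. One implicit step worth making explicit: you need that the cycle has no chords, which indeed follows from the ``at most one cycle'' clause of Proposition \ref{graph}, since a chord would create a second cycle. With that remark added, the argument is complete and self-contained.
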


The above propositions are implied from the fact that two segments with the diameter 
must cross if they do not share an end point. We consider an analogue of this fact 
for three dimensional Euclidean space. 

\begin{lem}\label{triangle}
Let $P_1P_2=P_2P_3=P_3P_1=c$ for $P_1, P_2, P_3 \in \mathbb{R}^3$ 
and $c\in \mathbb{R}_{>0}$, 
$\Pi$ be the plane determined by $\{ P_1, P_2, P_3 \}$ 
and $R^+$ and $R^-$ be the division of $\mathbb{R}^3$ by $\Pi$. 
Here let $\Pi$ be contained in $R^+$. 
Let $S^+=\{Q\in R^+|P_iQ\leq c \textit{ for any } i=1, 2, 3\}$ and 
$S^-=\{Q\in R^-|P_iQ\leq c \textit{ for any } i=1, 2, 3\}$. 
Then $Q_1Q_2\leq c$ for any $Q_1, Q_2\in S^+$ and 
equality holds only if $Q_1$ or $Q_2$ coincides $P_i$ for some $i=1,2,3$.
Moreover $Q_1Q_2< c$ for any $Q_1, Q_2\in S^-$. 
\end{lem}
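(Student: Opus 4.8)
The plan is to reduce everything to the convex body $S=S^+\cup S^-=\bigcap_{i=1}^{3}\bar B(P_i,c)$, the intersection of the three closed balls of radius $c$ about the $P_i$. I would first fix coordinates placing the centroid of $\{P_1,P_2,P_3\}$ at the origin with $\Pi=\{z=0\}$, so that $R^+=\{z\ge 0\}$, $R^-=\{z<0\}$, and the circumradius is $r=c/\sqrt{3}$. A one-line computation, summing $\sum_i|Q-P_i|^2=3|Q|^2+3r^2$, shows $S\subset\bar B(O,\sqrt{2/3}\,c)$, and that the two points $N^{\pm}=(0,0,\pm\sqrt{2/3}\,c)$ lie on all three spheres simultaneously; one checks $|N^{\pm}-P_i|=c$, so $P_1,P_2,P_3,N^{+}$ form a regular tetrahedron of edge $c$. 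The crucial structural remark is that $|N^{+}-N^{-}|=2\sqrt{2/3}\,c>c$, so the full body $S$ has diameter exceeding $c$; it is precisely the half-space condition that severs the pair $\{N^{+},N^{-}\}$, and any correct proof must use this.

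Since $S^+=S\cap R^+$ is convex and compact, its diameter is attained by a pair $Q_1,Q_2\in\partial S^+$ at which the segment $Q_1Q_2$ is orthogonal to outward supporting hyperplanes at both endpoints. The boundary $\partial S^+$ decomposes into three spherical caps $C_i\subset\partial B(P_i,c)$ and the flat disk $F=S\cap\{z=0\}$, which is exactly the planar Reuleaux triangle on $P_1,P_2,P_3$. I would then run a case analysis on the faces carrying $Q_1$ and $Q_2$. If both lie on $F$, the five points $Q_1,Q_2,P_1,P_2,P_3$ are coplanar with the $P_iP_j=c$ being diameters, and the planar crossing fact quoted before the lemma (two diameter segments must cross) gives $Q_1Q_2\le c$ with equality only at a vertex $P_i$. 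If $Q_j$ lies on a cap $C_i$, the outward normal there is radial, $Q_j-P_i$, so the supporting-hyperplane condition forces $Q_1Q_2$ to be parallel to $Q_j-P_i$; combined with the face constraint on the other endpoint and the ball inequalities $|Q_\ell-P_m|\le c$, this pins the extremal pair to a configuration along a line through two of the $P_i$ (or through a $P_i$ and $N^{+}$), whose length is at most the tetrahedron edge $c$.

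The hard part will be the equality discussion together with ruling out a long extremal pair located on the curved caps in the open upper half, i.e.\ showing that no two points with $z_1,z_2>0$ on the spherical boundary can be at distance $c$ unless one is a vertex $P_i$; morally this is where one reuses that $N^{+}$ is the unique upper vertex and that moving off the tetrahedron vertices strictly shortens the relevant chords. Granting the caps/flat-face analysis, I get $\operatorname{diam}(S^+)=c$ with equality forcing $Q_1$ or $Q_2\in\{P_1,P_2,P_3\}$, which is the first assertion. The statement for $S^-$ then follows immediately: $S^-\subset\{z<0\}$ contains none of the $P_i$ (which lie on $\Pi$), so by the equality characterization every pair $Q_1,Q_2\in S^-$ satisfies the \emph{strict} inequality $Q_1Q_2<c$.
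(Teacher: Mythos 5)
Your reduction to the convex body $S^{+}$ and the two peripheral cases you actually argue (both points on the flat Reuleaux disk $F$; both points in the smooth interior of spherical caps) are fine, but the case you explicitly defer --- ``Granting the caps/flat-face analysis'' --- is precisely the substance of the lemma, so as it stands the proposal has a genuine gap rather than a proof. The supporting-hyperplane device does not close it the way you suggest: $\partial S^{+}$ is not smooth. Besides the three open caps $C_i$ and the disk $F$ it has one-dimensional edges (the arcs $C_i\cap C_j$ running from each vertex $P_k$ up to the apex $N^{+}=(0,0,\sqrt{2/3}\,c)$, and the circular rim of $F$) and vertices $P_1,P_2,P_3,N^{+}$, where the outward normal is a two- or three-dimensional cone. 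At such points the first-order condition for a diameter pair only says $Q_2-Q_1$ lies in the normal cone at $Q_1$, and your claim that the extremal segment is ``pinned to a line through two of the $P_i$ (or through a $P_i$ and $N^{+}$)'' simply does not follow; since a priori the diameter could exceed $c$ and be attained at an edge--edge or vertex--edge pair, these are exactly the configurations that need a computation, and they are omitted. There is also a smaller slip at the end: the equality characterization you invoke is proved for pairs in $S^{+}$, but points of $S^{-}$ are not in $S^{+}$ (and it cannot hold on $S=S^{+}\cup S^{-}$, whose diameter is $2\sqrt{2/3}\,c>c$, as you yourself note). You need one more line --- reflect across $\Pi$, which preserves each ball $\bar B(P_i,c)$ because $P_i\in\Pi$, sending any pair in $S^{-}$ to a pair in $S^{+}\setminus\Pi$ --- before the strict inequality follows.

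For comparison, the paper's proof avoids convexity and normal cones entirely and is closer in spirit to the planar crossing fact you quote. Taking $Q_1$ to be the point of smaller height, it projects both points to $Q'_1,Q'_2\in\Pi$, slides $Q'_1$ outward along the line $Q'_1Q'_2$ to the point $Q''_1$ on the boundary of the planar Reuleaux triangle; this replacement cannot decrease the distance, since the horizontal component grows while the vertical gap satisfies $|z_2-z_1|\le z_2$. With $P_1Q''_1=c$, it then compares $Q''_1Q'_2$ against $P_2Q'_2$ and $P_3Q'_2$ (if either dominates, $Q_1Q_2<P_jQ_2\le c$), reducing everything to the two-dimensional fact that the farthest point of the Reuleaux triangle from a point on the arc about $P_1$ is the vertex $P_1$ itself, which also yields the equality case. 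If you want to rescue your route, this monotone-replacement trick is the quickest way to dispatch your deferred cap--cap case, and is considerably lighter than a stratum-by-stratum normal-cone analysis of $\partial S^{+}$.
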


\begin{proof} 
It is sufficient to prove the case where $Q_1, Q_2 \in S^+$. 
Let $Q'_1$ and $Q'_2$ be the projected point of $Q_1$ and $Q_2$ to $\Pi$, respectively. 
Without loss of generality, we may assume $Q_1Q'_1\leq Q_2Q'_2$. 
Moreover we may assume $Q'_1\ne Q'_2$, otherwise we clearly have $Q_1Q_2<c$. 
Let $Q''_1$ be the intersection of line $Q'_1Q'_2$ and 
$C=\{Q\in \Pi \cap S^+| P_iQ=c \textit{ for some } i=1, 2, 3\}$ 
such that $Q'_1Q''_1<Q'_2Q''_1$. 
Without loss of generality, we may assume $P_1Q''_1=c$.
If $Q''_1=P_i$ for some $i=2, 3$, then $Q_1Q_2\leq P_iQ_2\leq c$ and equality holds only if $Q_1=P_i$. 
Therefore we may assume $Q''_1\ne P_i$ for any $i=2, 3$. 
If $Q''_1Q'_2<P_2Q'_2$, then $Q_1Q_2\leq Q''_1Q_2<P_2Q_2\leq c$. 
Therefore we may assume $Q''_1Q'_2>P_2Q'_2$.
Similarly we may assume $Q''_1Q'_2>P_3Q'_2$.
Then $Q_2=P_1$ and this lemma holds.   
\end{proof}

In Lemma \ref{triangle}, let $c=D(X)$ for any $X\subset \mathbb{R}^3$. 
Then we have the following proposition. 

\begin{prop}\label{independent}
Let $G=DG(X)$ for $X\subset \mathbb{R}^3$ with $|X|=n$. 
Let $G$ contains a triangle $G'$. 
Then the graph $G-G'$ is a bipartite graph. 
In particular $\alpha (G)\geq \lceil \frac{n-3}{2} \rceil$. 
\end{prop}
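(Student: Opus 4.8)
The plan is to apply Lemma~\ref{triangle} directly with $c=D(X)$, reading off the bipartition from the two sets $S^+$ and $S^-$. First I would fix the given triangle $G'=\{P_1,P_2,P_3\}$, so that $P_iP_j=D(X)=c$, and let $\Pi$, $R^+$, $R^-$, $S^+$, $S^-$ be exactly as in the lemma. Since $c$ is the diameter of $X$, every vertex $Q\in X$ satisfies $P_iQ\leq c$ for all $i=1,2,3$, so $Q$ lies in $S^+\cup S^-$; moreover $P_1,P_2,P_3\in S^+$ because they lie on $\Pi\subset R^+$. Hence the vertices of $G-G'$ partition as $A\cup B$ with $A=(S^+\cap X)\setminus\{P_1,P_2,P_3\}$ and $B=S^-\cap X$, these two sets being disjoint since a point off $\Pi$ lies in exactly one of $R^+,R^-$.

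Next I would verify that $A$ and $B$ are each independent in $G$. For two distinct points $Q_1,Q_2\in A$, both lie in $S^+$ and neither coincides with any $P_i$, so the equality clause of Lemma~\ref{triangle} forces $Q_1Q_2<c$; thus $Q_1Q_2\neq D(X)$ and the two are non-adjacent. For $Q_1,Q_2\in B\subset S^-$ the lemma gives $Q_1Q_2<c$ outright, so $B$ is independent as well. Consequently every edge of $G-G'$ has one endpoint in $A$ and one in $B$, which is precisely the statement that $G-G'$ is bipartite.

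For the independence bound I would observe that $G-G'$ is a bipartite graph on $n-3$ vertices, so the larger of its two color classes is an independent set of size at least $\lceil\frac{n-3}{2}\rceil$. Because $G-G'$ is the induced subgraph of $G$ on $X\setminus\{P_1,P_2,P_3\}$, this set is still independent in $G$, giving $\alpha(G)\geq\lceil\frac{n-3}{2}\rceil$.

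The only delicate point—and the one step where the geometry really enters—is the equality analysis inside $S^+$: I must use the sharp ``equality only when some $Q_i$ is a $P_i$'' form of Lemma~\ref{triangle}, since without deleting the three triangle vertices the set $S^+\cap X$ need not be independent (the $P_i$ are mutually at distance $c$). Once the triangle is removed this subtlety disappears, and the remainder of the argument is pure bookkeeping with the half-space partition.
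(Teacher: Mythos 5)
Your proof is correct and is exactly the argument the paper intends: the paper derives this proposition directly from Lemma~\ref{triangle} with $c=D(X)$ (leaving the details implicit), and your bipartition into $S^+\cap X$ minus the triangle vertices versus $S^-\cap X$, using the sharp equality clause for $S^+$ and strict inequality for $S^-$, is precisely that intended reading. Your closing remark correctly identifies why the three triangle vertices must be deleted before $S^+\cap X$ becomes independent.
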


By Proposition \ref{independent}, for $12$-point three-distance set $X\subset \mathbb{R}^3$, if $G=DG(X)$ contains $K_3$, 
then $\alpha (G)\geq 5$. Therefore to prove the uniqueness of $12$-point three-distance sets in $\mathbb{R}^3$,
we want to know more information about 12-point three-distance sets in $\mathbb{R}^3$
with $\alpha(G)\geq 5$ and $K_3$-free for their diameter graphs $G$. 
The following is useful to prove the uniqueness of 12-point three-distance sets in $\mathbb{S}^2$. 

\begin{lem}\label{12-point}
Let $G$ be a $K_3$-free graph of order 12 with $\alpha(G)<5$. 
Then 

(i) $deg(v)=3$ or $4$ for any $v\in V(G)$;

(ii) there exists a vertex $v\in V(G)$ with $deg(v)=4$.
\end{lem}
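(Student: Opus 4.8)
The plan is to derive both statements from the $K_3$-free structure together with the small Ramsey numbers $R(3,3)=6$ and $R(3,4)=9$ recorded above, controlling the degrees from both sides and then excluding a $3$-regular configuration for (ii).

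For (i), the upper bound is immediate: since $G$ is $K_3$-free, the neighborhood $\Gamma(v)$ of any vertex $v$ is an independent set, so $deg(v)=|\Gamma(v)|\leq\alpha(G)\leq 4$. For the lower bound I would argue by contradiction. Suppose $deg(v)\leq 2$ and set $W=V(G)\setminus(\{v\}\cup\Gamma(v))$, so that $|W|\geq 12-1-2=9$. Every vertex of $W$ is non-adjacent to $v$, hence any independent set of $G[W]$ extends to an independent set of $G$ by adjoining $v$. As $G[W]$ is $K_3$-free on at least $9$ vertices, $R(3,4)=9$ forces $\alpha(G[W])\geq 4$, whence $\alpha(G)\geq 5$, contradicting the hypothesis. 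Thus $deg(v)\geq 3$ for every $v$, which together with the upper bound proves (i).

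For (ii), suppose no vertex has degree $4$; by (i) the graph $G$ is then $3$-regular. Fix $v$ with neighbors $a,b,c$; these are pairwise non-adjacent by triangle-freeness, so each of $a,b,c$ sends its two remaining edges into $W=V(G)\setminus\{v,a,b,c\}$, where $|W|=8$. Counting degrees in $W$ gives $8\cdot 3=24$ endpoints, of which exactly $6$ go to $\{a,b,c\}$, so $G[W]$ is a $K_3$-free graph on $8$ vertices with exactly $9$ edges. It therefore suffices to prove the numerical fact that every $K_3$-free graph on $8$ vertices with at most $9$ edges has independence number at least $4$: granting this, $\alpha(G[W])\geq 4$ and adjoining $v$ yields $\alpha(G)\geq 5$, the desired contradiction.

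The heart of the matter, and the step I expect to be the main obstacle, is this $8$-vertex fact, because $R(3,4)=9$ only guarantees $\alpha\geq 4$ starting from $9$ vertices; with $8$ vertices the Ramsey bound just fails and the extremal $K_3$-free graph with $\alpha=3$ (the Wagner graph, i.e.\ the M\"obius ladder $M_8$) genuinely exists, so one must exploit the sparsity. I would prove the contrapositive by bounding the minimum degree $\delta(G')$ of a $K_3$-free $8$-vertex graph $G'$ with $\alpha(G')\leq 3$. If some vertex $u$ has $deg(u)\leq 1$, then $V(G')\setminus(\{u\}\cup\Gamma(u))$ has at least $6$ vertices and induces a $K_3$-free graph of independence number $\leq 2$, which is impossible by $R(3,3)=6$; hence $\delta(G')\geq 2$. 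If $\delta(G')=2$ with $u\sim a,b$, then $a\not\sim b$ and the five remaining vertices induce a $K_3$-free graph with $\alpha\leq 2$, forcing it to be $C_5$; a short case check on how $a,b$ attach to this $C_5$ shows that if $\{a,b\}$ sends fewer than three edges to it, then at least three cycle-vertices avoid both $a$ and $b$, and any three vertices of $C_5$ contain a non-adjacent pair, producing an independent $4$-set together with $a,b$. Thus at least three such edges are needed, giving at least $2+5+3=10$ edges; since $\delta(G')\geq 3$ already forces at least $12$ edges, in every case $G'$ has at least $10$ edges. Consequently $9$ edges forces $\alpha\geq 4$, completing the proof of (ii).
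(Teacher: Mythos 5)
Your proof of part (i) is exactly the paper's: bound $\deg(v)\leq 4$ via the independence of $\Gamma(v)$, and rule out $\deg(v)\leq 2$ by applying $R(3,4)=9$ to the at least $9$ vertices outside $N[v]$ and adjoining $v$. For part (ii), however, you take a genuinely different route, and your argument is correct. The paper also assumes $G$ is $3$-regular and fixes a vertex $v$, but then works with the distance partition: it shows $U=\cup_{i\geq 3}\Gamma_i(v)$ would yield an independent $5$-set $\Gamma(v)\cup\{u_1,u_2\}$ unless $U$ is a clique, forcing $|U|=2$ and $|\Gamma_2(v)|=6$ with each second-neighbor matched to a unique first-neighbor $w'$; it then exhibits the explicit independent $5$-set $\bigl(\Gamma(v)\setminus\{w'\}\bigr)\cup\{w_1,w_2,u\}$, where $\{w_1,w_2\}=\Gamma_2(v)\cap\Gamma(w')$ avoids $\Gamma(u)$ because $u$ has only two edges into $\Gamma_2(v)$. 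You instead delete $N[v]$, count that the remaining triangle-free graph on $8$ vertices has exactly $9$ edges, and prove the extremal fact that a triangle-free $8$-vertex graph with $\alpha\leq 3$ has at least $10$ edges, via minimum degree: $\delta\geq 2$ by $R(3,3)=6$, and in the $\delta=2$ case the five vertices off $N[u]$ must induce $C_5$ (the unique $5$-vertex graph with no $K_3$ and no independent $3$-set), with at least three edges from $\{a,b\}$ into the cycle since otherwise three cycle vertices avoid both and contain a non-adjacent pair completing an independent $4$-set with $a,b$; the tallies $5+2+3=10$ and $\lceil 8\cdot 3/2\rceil=12$ both exceed $9$. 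Your observation that the Wagner graph blocks a pure Ramsey argument on $8$ vertices is apt and explains why the sparsity count is genuinely needed. The trade-off: the paper's structural argument is shorter and self-contained within the $3$-regular configuration, while yours isolates a reusable quantitative lemma (in effect a lower bound on the minimum size of a triangle-free $8$-vertex graph with independence number $3$) at the cost of a slightly longer case analysis.
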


\begin{proof}
(i) For any vertex $v$, $\Gamma(v)$ is an independent set since $G$ is $K_3$-free. 
Therefore $deg(v)< 5$. 
If there exists a vertex $v$ with $deg(v)\leq 2$, then there exists an independent set 
$H\subset \cup _{i\geq 2}\Gamma_i (v)$ with $|H|\geq 4$ 
since $|\cup _{i\geq 2}\Gamma_i (v)|\geq 9$ and $R(3,4)=9$. 
Then $H\cup \{v\}$ is an independent set with at least five vertices. \\
(ii) Suppose $G$ is 3-regular. Fix a vertex $v\in V(G)$. 
Let $U=\cup _{i\geq 3}\Gamma_i (v)$, clearly $|U|\geq 2$. 
If there exists $u_1, u_2\in U$ such that $u_1$ and $u_2$ are not adjacent, 
then $\Gamma (v) \cup \{u_1, u_2\}$ is an independent set with five vertices. 
By the $K_3$-free condition, $|U|=2$ and $|\Gamma (v)|=6$. 
For a $u\in U$, there exist $w'\in \Gamma (v)$ and $w_1, w_2\in \Gamma _2(v)\cap \Gamma (w')$ 
such that $w_1, w_2\notin \Gamma (u)$. 
Then $\Gamma (v)\setminus \{w'\}\cup \{w_1, w_2, u\}$ is an independent set with five vertices. 
This is a contradiction. 
\end{proof}

\noindent
Proof of Theorem \ref{sub}\\
(i) Let $X$ be the 14-point three-distance set in $\mathbb{R}^3$ and $G=DG(X)$.  
If $G$ contains $K_3$, then $X$ contains a six-point two-distance set in $\mathbb{R}^3$ 
by Proposition \ref{independent}. 
Otherwise, $\alpha(G)\geq 5$ since $R(3,5)=14$. 

\noindent 
(ii) Let $X$ be the 12-point three-distance set in $S^2$ and $G=DG(X)$.  
We may assume $G$ is $K_3$-free and $\alpha(G)<5$. 
By Lemma \ref{12-point}, there exists a vertex $O$ with $deg(O)=4$. 
Then the neighbors of $O$ consists of a four-point two-distance set in $S^1$, 
otherwise $G$ contains $K_3$ and then $\alpha(G)\geq 5$. 

\noindent 
\section{Diameter graphs for a subset in $\mathbb{R}^3$}

Dolnicov proved more general result than Lemma \ref{triangle}. 
In this section, we improve Theorem \ref{sub} by using the Dolnicov's result. 

\begin{prop}\label{graph}
Let $G=DG(X)$ be the diameter graph for $X\subset \mathbb{R}^3$. 
If $G$ contains two cycles with odd lengths, then they have a common vertex. 
\end{prop}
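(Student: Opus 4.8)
The plan is to prove the contrapositive: I will show that $G=DG(X)$ cannot contain two vertex-disjoint odd cycles, which is exactly the assertion that any two odd cycles share a vertex. First I would record what the tools already in hand do and do not give. If $G$ contains a triangle $G'$, then $G-G'$ is bipartite by Lemma~\ref{triangle} and its corollary, so $G-G'$ has no odd cycle; hence every odd cycle of $G$ must pass through a vertex of $G'$. This is suggestive but insufficient: two odd cycles could still meet $G'$ at different vertices while being disjoint from one another, and, more to the point, the case relevant to the $12$-point application is the $K_3$-free case, where $G$ has no triangle at all and Lemma~\ref{triangle} says nothing. Since an odd cycle of length at least $5$ need not contain any triangle (the diameter graph of a regular pentagon is $C_5$), there is no way to localize a long odd cycle inside an equilateral triangle and feed it to Lemma~\ref{triangle}. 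A genuinely three-dimensional input, stronger than Lemma~\ref{triangle}, is therefore required.

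That input is the Dolnicov result alluded to at the start of this section: for finite $X\subset\mathbb{R}^3$, the diameter graph $DG(X)$ has no two vertex-disjoint odd cycles. The plan is to invoke it directly. For a self-contained argument one would reconstruct the underlying geometry, whose engine is the family of closed balls $B_P=B(P,D(X))$ for $P\in X$: because $D(X)$ is the diameter, every point of $X$ lies in $\bigcap_{P\in X}B_P$, and each diameter edge $\{P,Q\}$ forces $Q\in\partial B_P$ with outward normal $(Q-P)/D(X)$. Assigning to each diameter edge this unit normal in $S^2$ turns a diametral cycle into a closed spherical curve, and a parity/degree argument on $S^2$ shows that two such curves coming from vertex-disjoint odd cycles would be forced to cross, contradicting disjointness.

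The main obstacle is precisely this geometric lemma, the three-dimensional replacement for the planar fact (used for the $\mathbb{R}^2$ propositions recalled earlier) that two diameter segments must cross. In the plane the crossing is elementary and local, and it already forbids even cycles and confines all odd behaviour to a single cycle. In space two diameter segments may be skew and disjoint, so the replacement must be global and topological rather than a local crossing. The delicate point is to arrange the parity bookkeeping so that it genuinely uses the oddness of both cycles: even cycles are realizable in $DG(X)$ for $X\subset\mathbb{R}^3$, so the argument must fail for them and succeed only when both cycles are odd.
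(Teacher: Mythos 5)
Your proposal matches the paper's treatment exactly: the paper offers no proof of this proposition at all, presenting it as Dolnikov's known theorem on diameter graphs in $\mathbb{R}^3$ (the section opens with ``Dolnicov proved more general result than Lemma \ref{triangle}''), which is precisely the citation you make. Your supplementary sketch of a self-contained topological proof of Dolnikov's theorem is incomplete, but you flag it as such, and the direct invocation of Dolnikov's result is exactly what the paper itself does.
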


Clealy this proposition is stronger than Lemma \ref{triangle}. 
In particular, we have the following corollary: 

\begin{cor}\label{cycle}
Every diameter graph for a subset in $\mathbb{R}^3$ 
does not contain two ditinct five cycles.
\end{cor}

\begin{prop}\label{ind}
Let $G=DG(X)$ be the diameter graph for $X\subset \mathbb{R}^3$ with $|X|=12$. 
Then $\alpha (G)\geq 5$. 
\end{prop}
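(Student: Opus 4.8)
The plan is to split on whether the diameter graph $G=DG(X)$ contains a triangle. If $G$ contains $K_3$, then the three-dimensional version of Proposition~\ref{independent} gives $\alpha(G)\ge \lceil \frac{12-3}{2}\rceil = 5$ immediately, and we are done. So the substance lies in the $K_3$-free case, where I would argue by contradiction: assume $G$ is $K_3$-free with $\alpha(G)\le 4$, and aim to produce a configuration forbidden by the geometry, namely two distinct five-cycles as ruled out by Corollary~\ref{cycle}.

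Under this assumption Lemma~\ref{12-point} applies and supplies a vertex $O$ with $\deg(O)=4$ while forcing every degree to be $3$ or $4$. Write $A=\Gamma(O)$, so $|A|=4$ and $A$ is independent because $G$ is $K_3$-free. The first key step is to localize the whole graph near $O$: if some vertex $u$ lay at distance $\ge 3$ from $O$, then $u$ would be adjacent neither to $O$ nor to any vertex of $A$, so $A\cup\{u\}$ would be an independent set of size $5$. Hence $\Gamma_{\ge 3}(O)=\emptyset$, the vertex set decomposes as $\{O\}\cup A\cup B$ with $B=\Gamma_2(O)$ and $|B|=7$, and every vertex of $B$ has at least one neighbour in $A$.

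The second step is a short edge count inside $B$. Each $a_i\in A$ spends one edge on $O$ and, since $A$ is independent and $\deg(a_i)\le 4$, has between $2$ and $3$ neighbours in $B$; thus the number $e_{AB}$ of $A$--$B$ edges satisfies $e_{AB}\le 12$. On the other hand every vertex of $B$ has degree at least $3$, so the degrees inside $B$ sum to $e_{AB}+2e_{BB}\ge 21$, which forces at least $e_{BB}\ge 5$ edges lying entirely within $B$.

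The final step turns each $B$-edge into a five-cycle through $O$. Given an edge $z_1z_2$ inside $B$, the $K_3$-free hypothesis means $z_1$ and $z_2$ share no common neighbour in $A$; since each has at least one such neighbour, I can choose distinct $a_i\in A\cap\Gamma(z_1)$ and $a_j\in A\cap\Gamma(z_2)$, whereupon $O\,a_i\,z_1\,z_2\,a_j\,O$ is a genuine $C_5$. Because such a five-cycle contains exactly one edge with both endpoints in $B$, distinct $B$-edges yield distinct five-cycles; as $e_{BB}\ge 5\ge 2$, the graph contains at least two distinct five-cycles, contradicting Corollary~\ref{cycle}. I expect the main obstacle to be not computational but conceptual: the crucial observation is that $K_3$-freeness is exactly what guarantees the two $A$-endpoints of each $B$-edge can be taken distinct, so that every one of the (at least five) $B$-edges genuinely produces a five-cycle. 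Once that is in hand, only two such edges are needed, and the crude degree count delivers them with room to spare.
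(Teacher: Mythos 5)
Your reduction to the triangle-free case, the decomposition $V(G)=\{O\}\cup A\cup B$ with $|B|=7$, and the edge count forcing $e_{BB}\geq 5$ are all correct, and each $B$-edge $z_1z_2$ does yield a genuine five-cycle $O\,a_i\,z_1\,z_2\,a_j\,O$ (the $K_3$-free observation that $z_1,z_2$ have no common neighbour in $A$ is fine). The fatal problem is the final step: every five-cycle you construct passes through the vertex $O$, so any two of them intersect. Corollary~\ref{cycle} is derived from the Dolnikov-type proposition of Section 3, which asserts only that two odd cycles in a diameter graph in $\mathbb{R}^3$ must have a \emph{common vertex}; consequently the corollary can only forbid two \emph{vertex-disjoint} five-cycles, not merely two non-identical ones. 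A bundle of five-cycles all sharing the vertex $O$ is perfectly compatible with Dolnikov's theorem, so you have derived no contradiction. (A sanity check: if non-identical five-cycles were forbidden, the paper's own argument, which ends every one of its cases with a pair of five-cycles on \emph{disjoint} vertex sets, would be pointlessly elaborate.)

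This is precisely why the paper's proof is long. After the same opening moves (a vertex $v_0$ of degree $4$ via Lemma~\ref{12-point}, $\Gamma_i(v_0)=\emptyset$ for $i\geq 3$, hence $|\Gamma_2(v_0)|=7$), it distributes the seven vertices of $\Gamma_2(v_0)$ among the four neighbours of $v_0$ according to $(m_1,m_2,m_3,m_4)\in\{(3,3,1,0),(3,2,2,0),(3,2,1,1),(2,2,2,1)\}$ and, in each of the resulting cases A--D, repeatedly exploits the hypothesis $\alpha(G)\leq 4$ on carefully chosen five-element sets to force additional edges until two five-cycles on disjoint vertex sets (necessarily covering $10$ of the $12$ vertices) appear. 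Your global counting argument gives no control over where the five-cycles sit, and in particular cannot produce even one five-cycle avoiding $O$ without substantial further work; repairing the proposal would mean replacing your last step by an argument manufacturing two disjoint five-cycles, which is essentially the paper's case analysis.
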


\begin{proof}
Let $G$ be a triangle-free graph of order $12$ with $\alpha (G)\leq 4$. 
We will prove $G$ contaoins two distinct $5$-cycles. \\

By Lemma \ref{12-point}, there exists $v_0 \in V(G)$ with $deg (v_0)=4$.  
If there exists $w\in \Gamma _i (v_0)$ for $i \geq 3$, then $\Gamma (v_0)\cup \{w\}$ is an independent set. 
Therefore $\Gamma _i (v_0)=\emptyset$ for $i \geq 3$, so $|\Gamma _2(v_0)|=7$. 
Let $\Gamma _1 (v_0)=\{v_1, v_2, v_3, v_4\}$ and $\Gamma _2 (v_0)=\{v_5, v_6, \ldots, v_{11}\}$. 
Suppose $m_1, m_2, m_3, m_4 \in\{0,1,2,3\}$ $(m_1\geq m_2\geq m_3\geq m_4)$ and $m_1+m_2+m_3+m_4=7$. 
Then we have $$(m_1, m_2, m_3, m_4)\in \{(3,3,1,0), (3,2,2,0), (3,2,1,1), (2,2,2,1)\}.$$ 
Therefore $G$ contains one of the subgraphs A, B, C, D in Figure. \\

\newpage
\noindent 
A \qquad \qquad \qquad \qquad \qquad \qquad \qquad \qquad \qquad \qquad \qquad B\\
\unitlength 0.1in
\begin{picture}( 28.8500, 18.1000)(  2.1500,-19.1000)
%
\special{pn 8}%
\special{ar 1400 1800 100 100  0.0000000 6.2831853}%
%
\special{pn 8}%
\special{ar 1400 1000 100 100  0.0000000 6.2831853}%
%
\special{pn 8}%
\special{ar 1150 1800 100 100  0.0000000 6.2831853}%
%
\special{pn 8}%
\special{ar 2200 1000 100 100  0.0000000 6.2831853}%
%
\special{pn 8}%
\special{ar 1650 1800 100 100  0.0000000 6.2831853}%
%
\special{pn 8}%
\special{ar 2200 1800 100 100  0.0000000 6.2831853}%
%
\special{pn 8}%
\special{ar 850 1800 100 100  0.0000000 6.2831853}%
%
\special{pn 8}%
\special{ar 1800 200 100 100  0.0000000 6.2831853}%
%
\special{pn 8}%
\special{ar 600 1810 100 100  0.0000000 6.2831853}%
%
\special{pn 8}%
\special{ar 600 1000 100 100  0.0000000 6.2831853}%
%
\special{pn 8}%
\special{ar 350 1800 100 100  0.0000000 6.2831853}%
%
\special{pn 8}%
\special{ar 3000 1000 100 100  0.0000000 6.2831853}%
%
\special{pn 8}%
\special{pa 1720 260}%
\special{pa 660 920}%
\special{fp}%
\special{pa 1890 260}%
\special{pa 2920 930}%
\special{fp}%
\special{pa 1760 290}%
\special{pa 1440 900}%
\special{fp}%
\special{pa 1840 290}%
\special{pa 2150 910}%
\special{fp}%
\put(18.0000,-2.0000){\makebox(0,0){$0$}}%
\put(6.0000,-10.0000){\makebox(0,0){$1$}}%
\put(14.0000,-10.0000){\makebox(0,0){$2$}}%
\put(22.0000,-10.0000){\makebox(0,0){$3$}}%
\put(30.0000,-10.0000){\makebox(0,0){$4$}}%
\put(3.5000,-18.0000){\makebox(0,0){$5$}}%
\put(6.0000,-18.0000){\makebox(0,0){$6$}}%
\put(8.5000,-18.0000){\makebox(0,0){$7$}}%
\put(11.5000,-18.0000){\makebox(0,0){$8$}}%
\put(14.0000,-18.0000){\makebox(0,0){$9$}}%
\put(16.5000,-18.0000){\makebox(0,0){$10$}}%
\put(22.0000,-18.0000){\makebox(0,0){$11$}}%
%
\special{pn 8}%
\special{pa 600 1100}%
\special{pa 600 1700}%
\special{fp}%
\special{pa 540 1080}%
\special{pa 360 1700}%
\special{fp}%
\special{pa 660 1080}%
\special{pa 820 1700}%
\special{fp}%
%
\special{pn 8}%
\special{pa 1400 1100}%
\special{pa 1400 1700}%
\special{fp}%
\special{pa 1340 1080}%
\special{pa 1160 1700}%
\special{fp}%
\special{pa 1460 1080}%
\special{pa 1620 1700}%
\special{fp}%
%
\special{pn 8}%
\special{pa 2200 1110}%
\special{pa 2200 1110}%
\special{fp}%
%
\special{pn 8}%
\special{pa 2200 1100}%
\special{pa 2200 1700}%
\special{fp}%
\end{picture}%
 \qquad 
\unitlength 0.1in
\begin{picture}( 28.8500, 18.1000)(  2.1500,-19.1000)
%
\special{pn 8}%
\special{ar 1600 1800 100 100  0.0000000 6.2831853}%
%
\special{pn 8}%
\special{ar 1400 1000 100 100  0.0000000 6.2831853}%
%
\special{pn 8}%
\special{ar 1200 1800 100 100  0.0000000 6.2831853}%
%
\special{pn 8}%
\special{ar 2200 1000 100 100  0.0000000 6.2831853}%
%
\special{pn 8}%
\special{ar 2000 1800 100 100  0.0000000 6.2831853}%
%
\special{pn 8}%
\special{ar 2400 1800 100 100  0.0000000 6.2831853}%
%
\special{pn 8}%
\special{ar 850 1800 100 100  0.0000000 6.2831853}%
%
\special{pn 8}%
\special{ar 1800 200 100 100  0.0000000 6.2831853}%
%
\special{pn 8}%
\special{ar 600 1810 100 100  0.0000000 6.2831853}%
%
\special{pn 8}%
\special{ar 600 1000 100 100  0.0000000 6.2831853}%
%
\special{pn 8}%
\special{ar 350 1800 100 100  0.0000000 6.2831853}%
%
\special{pn 8}%
\special{ar 3000 1000 100 100  0.0000000 6.2831853}%
%
\special{pn 8}%
\special{pa 1720 260}%
\special{pa 660 920}%
\special{fp}%
\special{pa 1890 260}%
\special{pa 2920 930}%
\special{fp}%
\special{pa 1760 290}%
\special{pa 1440 900}%
\special{fp}%
\special{pa 1840 290}%
\special{pa 2150 910}%
\special{fp}%
\put(18.0000,-2.0000){\makebox(0,0){$0$}}%
\put(6.0000,-10.0000){\makebox(0,0){$1$}}%
\put(14.0000,-10.0000){\makebox(0,0){$2$}}%
\put(22.0000,-10.0000){\makebox(0,0){$3$}}%
\put(30.0000,-10.0000){\makebox(0,0){$4$}}%
\put(3.5000,-18.0000){\makebox(0,0){$5$}}%
\put(6.0000,-18.0000){\makebox(0,0){$6$}}%
\put(8.5000,-18.0000){\makebox(0,0){$7$}}%
\put(12.0000,-18.0000){\makebox(0,0){$8$}}%
\put(16.0000,-18.0000){\makebox(0,0){$9$}}%
\put(20.0000,-18.0000){\makebox(0,0){$10$}}%
\put(24.0000,-18.0000){\makebox(0,0){$11$}}%
%
\special{pn 8}%
\special{pa 600 1100}%
\special{pa 600 1700}%
\special{fp}%
\special{pa 540 1080}%
\special{pa 340 1700}%
\special{fp}%
\special{pa 670 1080}%
\special{pa 850 1700}%
\special{fp}%
%
\special{pn 8}%
\special{pa 1340 1080}%
\special{pa 1200 1700}%
\special{fp}%
\special{pa 1470 1080}%
\special{pa 1580 1700}%
\special{fp}%
%
\special{pn 8}%
\special{pa 2140 1090}%
\special{pa 2000 1710}%
\special{fp}%
\special{pa 2270 1090}%
\special{pa 2380 1710}%
\special{fp}%
\end{picture}%
 \\

\noindent 
C \qquad \qquad \qquad \qquad \qquad \qquad \qquad \qquad \qquad \qquad \qquad D\\
\unitlength 0.1in
\begin{picture}( 28.8500, 18.1000)(  2.1500,-19.1000)
%
\special{pn 8}%
\special{ar 1600 1800 100 100  0.0000000 6.2831853}%
%
\special{pn 8}%
\special{ar 1400 1000 100 100  0.0000000 6.2831853}%
%
\special{pn 8}%
\special{ar 1200 1800 100 100  0.0000000 6.2831853}%
%
\special{pn 8}%
\special{ar 2200 1000 100 100  0.0000000 6.2831853}%
%
\special{pn 8}%
\special{ar 2200 1800 100 100  0.0000000 6.2831853}%
%
\special{pn 8}%
\special{ar 3000 1800 100 100  0.0000000 6.2831853}%
%
\special{pn 8}%
\special{ar 850 1800 100 100  0.0000000 6.2831853}%
%
\special{pn 8}%
\special{ar 1800 200 100 100  0.0000000 6.2831853}%
%
\special{pn 8}%
\special{ar 600 1810 100 100  0.0000000 6.2831853}%
%
\special{pn 8}%
\special{ar 600 1000 100 100  0.0000000 6.2831853}%
%
\special{pn 8}%
\special{ar 350 1800 100 100  0.0000000 6.2831853}%
%
\special{pn 8}%
\special{ar 3000 1000 100 100  0.0000000 6.2831853}%
%
\special{pn 8}%
\special{pa 1720 260}%
\special{pa 660 920}%
\special{fp}%
\special{pa 1890 260}%
\special{pa 2920 930}%
\special{fp}%
\special{pa 1760 290}%
\special{pa 1440 900}%
\special{fp}%
\special{pa 1840 290}%
\special{pa 2150 910}%
\special{fp}%
\put(18.0000,-2.0000){\makebox(0,0){$0$}}%
\put(6.0000,-10.0000){\makebox(0,0){$1$}}%
\put(14.0000,-10.0000){\makebox(0,0){$2$}}%
\put(22.0000,-10.0000){\makebox(0,0){$3$}}%
\put(30.0000,-10.0000){\makebox(0,0){$4$}}%
\put(3.5000,-18.0000){\makebox(0,0){$5$}}%
\put(6.0000,-18.0000){\makebox(0,0){$6$}}%
\put(8.5000,-18.0000){\makebox(0,0){$7$}}%
\put(12.0000,-18.0000){\makebox(0,0){$8$}}%
\put(16.0000,-18.0000){\makebox(0,0){$9$}}%
\put(22.0000,-18.0000){\makebox(0,0){$10$}}%
\put(30.0000,-18.0000){\makebox(0,0){$11$}}%
%
\special{pn 8}%
\special{pa 600 1100}%
\special{pa 600 1700}%
\special{fp}%
\special{pa 540 1080}%
\special{pa 340 1700}%
\special{fp}%
\special{pa 670 1080}%
\special{pa 850 1700}%
\special{fp}%
%
\special{pn 8}%
\special{pa 1340 1080}%
\special{pa 1200 1700}%
\special{fp}%
\special{pa 1470 1080}%
\special{pa 1580 1700}%
\special{fp}%
%
\special{pn 8}%
\special{pa 2200 1100}%
\special{pa 2200 1700}%
\special{fp}%
\special{pa 3000 1100}%
\special{pa 3000 1700}%
\special{fp}%
\end{picture}%
 \qquad 
\unitlength 0.1in
\begin{picture}( 28.3500, 18.0000)(  2.6500,-19.0000)
%
\special{pn 8}%
\special{ar 1600 1800 100 100  0.0000000 6.2831853}%
%
\special{pn 8}%
\special{ar 1400 1000 100 100  0.0000000 6.2831853}%
%
\special{pn 8}%
\special{ar 1200 1800 100 100  0.0000000 6.2831853}%
%
\special{pn 8}%
\special{ar 2200 1000 100 100  0.0000000 6.2831853}%
%
\special{pn 8}%
\special{ar 2400 1800 100 100  0.0000000 6.2831853}%
%
\special{pn 8}%
\special{ar 3000 1800 100 100  0.0000000 6.2831853}%
%
\special{pn 8}%
\special{ar 2000 1800 100 100  0.0000000 6.2831853}%
%
\special{pn 8}%
\special{ar 1800 200 100 100  0.0000000 6.2831853}%
%
\special{pn 8}%
\special{ar 800 1800 100 100  0.0000000 6.2831853}%
%
\special{pn 8}%
\special{ar 600 1000 100 100  0.0000000 6.2831853}%
%
\special{pn 8}%
\special{ar 400 1800 100 100  0.0000000 6.2831853}%
%
\special{pn 8}%
\special{ar 3000 1000 100 100  0.0000000 6.2831853}%
%
\special{pn 8}%
\special{pa 1720 260}%
\special{pa 660 920}%
\special{fp}%
\special{pa 1890 260}%
\special{pa 2920 930}%
\special{fp}%
\special{pa 1760 290}%
\special{pa 1440 900}%
\special{fp}%
\special{pa 1840 290}%
\special{pa 2150 910}%
\special{fp}%
\put(18.0000,-2.0000){\makebox(0,0){$0$}}%
\put(6.0000,-10.0000){\makebox(0,0){$1$}}%
\put(14.0000,-10.0000){\makebox(0,0){$2$}}%
\put(22.0000,-10.0000){\makebox(0,0){$3$}}%
\put(30.0000,-10.0000){\makebox(0,0){$4$}}%
\put(4.0000,-18.0000){\makebox(0,0){$5$}}%
\put(8.0000,-18.0000){\makebox(0,0){$6$}}%
\put(12.0000,-18.0000){\makebox(0,0){$7$}}%
\put(16.0000,-18.0000){\makebox(0,0){$8$}}%
\put(20.0000,-18.0000){\makebox(0,0){$9$}}%
\put(24.0000,-18.0000){\makebox(0,0){$10$}}%
\put(30.0000,-18.0000){\makebox(0,0){$11$}}%
%
\special{pn 8}%
\special{pa 1340 1080}%
\special{pa 1200 1700}%
\special{fp}%
\special{pa 1470 1080}%
\special{pa 1580 1700}%
\special{fp}%
%
\special{pn 8}%
\special{pa 540 1080}%
\special{pa 400 1700}%
\special{fp}%
\special{pa 680 1070}%
\special{pa 770 1700}%
\special{fp}%
%
\special{pn 8}%
\special{pa 2140 1090}%
\special{pa 2000 1710}%
\special{fp}%
\special{pa 2280 1080}%
\special{pa 2370 1710}%
\special{fp}%
%
\special{pn 8}%
\special{pa 3000 1090}%
\special{pa 3000 1700}%
\special{fp}%
\end{picture}%
\\

\noindent 
Case A 

Since $\{v_2, v_3, v_5, v_6, v_7\}$ is not an independent set, 
$v_3\sim v_j$ for some $j\in \{5,6,7\}$. 
Without loss of generality, we may assume $v_3\sim v_7$. 
Similaly we may assume $v_3\sim v_8$. 
Since $\{v_{2}, v_{3}, v_{4}, v_{5}, v_{6}\}$ is not an independent set, 
we may assume $v_{4}\sim v_{5}$. 
Similaly we may assume $v_4\sim v_{10}$. 
We may assume $v_4\nsim v_{8}$ and $v_4\nsim v_{9}$ because $v_4$ can adjacent at most one of the rest vertices. 
If $v_4\nsim v_{11}$, then $v_9\sim v_{11}$ since $\{v_{1}, v_{4}, v_{8}, v_{9}, v_{11}\}$ is not an independent set.
Moreover $v_i\sim v_{10}$ for some $i\in \{6, 7\}$ since $\{v_{0}, v_{5}, v_{6}, v_{7}, v_{10}\}$ is not an independent set.
Then we get two ditinct $5$-cycles 
$v_{0}\sim v_{4} \sim v_{10}\sim v_{i} \sim v_{1}\sim v_{0}$ and 
$v_{3}\sim v_{8} \sim v_{2}\sim v_{9} \sim v_{11}\sim v_{3}$.
If $v_4\sim v_{11}$, then $v_6\sim v_{11}$ 
since $\{v_{0}, v_{5}, v_{6}, v_{7}, v_{11}\}$ is not an independent set. 
Since $\{v_{0}, v_{5}, v_{7}, v_{8}, v_{11}\}$ is not an independent set, we have $v_{5}\sim v_{8}$. 
Then we get two ditinct $5$-cycles 
$v_{0}\sim v_{4} \sim v_{5}\sim v_{8} \sim v_{2}\sim v_{0}$ and 
$v_{1}\sim v_{7} \sim v_{3}\sim v_{11} \sim v_{6}\sim v_{1}$. \\

\noindent 
Case B
 
Since $\{v_{2}, v_{3}, v_{5}, v_{6}, v_{7}\}$ is not an independent set, 
we may assume $v_{2}\sim v_{7}$.
Then we may assume $v_{4}\sim v_{11}$ 
since $\{v_{1}, v_{2}, v_{4}, v_{10}, v_{11}\}$ is not an independent set. 
In this case, we may assume $v_3\nsim v_i$ for any $i\in \{5, 6, 8, 9\}$ 
othewise we have already seen in Case A. 
Since $\{v_2, v_3, v_4, v_5, v_6\}$ is not an independent set, 
we may assume $v_4\sim v_5$. Similaly we may assume $v_4\sim v_9$. 
Since $\{v_{0}, v_{5}, v_{9}, v_{10}, v_{11}\}$ is not an independent set, 
$v_{10}\sim v_5$ or $v_{10}\sim v_9$. 
In this case, without loss of generality we may assume $v_{9}\sim v_{10}$. 
Since $\{v_{0}, v_{5}, v_{7}, v_{8}, v_{9}\}$ is not an independent set, $v_{5}\sim v_{8}$. 
Then we get two ditinct $5$-cycles 
$v_{1}\sim v_{7} \sim v_{2}\sim v_{8} \sim v_{5}\sim v_{1}$ and 
$v_{3}\sim v_{11} \sim v_{4}\sim v_{9} \sim v_{10}\sim v_{3}$. \\

\noindent
Case C

Since $\{v_{1}, v_{3}, v_{4}, v_{8}, v_{9}\}$ is not an independent set, we may assume $v_{3}\sim v_{9}$.
Since $\{v_{2}, v_{3}, v_{5}, v_{6}, v_{7}\}$ is not an independent set, we may assume $v_{2}\sim v_{7}$ (C1).
Since $\{v_{2}, v_{3}, v_{4}, v_{5}, v_{6}\}$ is not an independent set, 
it is enogh to consider the cases (a) $v_{3}\sim v_{5}$ and 
(b) $v_{4}\sim v_{5}$ with $v_{3}\nsim v_{5}$ and $v_{3}\nsim v_{6}$. 
If $v_{3}\sim v_{5}$, then we may assume $v_{11}\sim v_{i}$ 
for some $i\in \{5,6, 7\}$ 
since $\{v_{0}, v_{5}, v_{6}, v_{7}, v_{11}\}$ is not an independent set. 
Moreover we may assume $v_{11}\sim v_{i}$ for some $i\in \{5,6\}$ by a symmetry.  
Since $\{v_{0}, v_{7}, v_{8}, v_{9}, v_{10}\}$ is not an independent set, 
we may assume $v_{10}\sim v_{i}$ for some $j\in \{7,8\}$. 
Then we get two ditinct $5$-cycles 
$v_{0}\sim v_{4} \sim v_{11}\sim v_{i} \sim v_{1}\sim v_{0}$ and 
$v_{2}\sim v_{9} \sim v_{3}\sim v_{10} \sim v_{j}\sim v_{2}$. 
Suppose $v_{4}\sim v_{5}$, $v_{3}\nsim v_{5}$ and $v_{3}\nsim v_{6}$. 
In this case, we may assume $v_{3}\nsim v_{i}$ for any $i\in \{5,6,11\}$ othewise 
we have already seen in Case A. 
Similaly we may assume $v_{4}\nsim v_{j}$ for any $j\in \{8,9,10\}$. 
Then $v_6\sim v_{11}$ since $\{v_{2}, v_{3}, v_{5}, v_{6}, v_{11}\}$ is not an independent set. 
Similarly $v_8\sim v_{10}$. 
Then we get two ditinct $5$-cycles 
$v_{4}\sim v_{5} \sim v_{1}\sim v_{6} \sim v_{11}\sim v_{4}$ and 
$v_{2}\sim v_{9} \sim v_{3}\sim v_{10} \sim v_{8}\sim v_{2}$. \\

\noindent
Case D
 
Since $\{v_{1}, v_{2}, v_{4}, v_{9}, v_{10}\}$ is not an independent set, we may assume $v_{4}\sim v_{10}$. 
Since $\{v_{2}, v_{3}, v_{4}, v_{5}, v_{6}\}$ is not an independent set, 
there exists an edge between $v_i$ ($i\in \{2,3,4\}$) and $v_j$ ($j\in \{5,6\}$). 
For each case, we can conclde to the above cases. 
\end{proof}

Proposition \ref{ind} implies Theorem {sub1}. 

\noindent 
\section{Calculations}

In this section, we prove Theorem \ref{main} from Theorem \ref{sub1}. To complete the proof, 
it is sufficient to classify $12$-point three-distance sets $X\subset \mathbb{R}^3$ 
containing a five-point two-distance set $Y\subset \mathbb{R}^3$ with $D(Y)<D(X)$ and 
$12$-point three-distance sets $X\subset \mathbb{S}^2$ 
containing a four-point two-distance set $Y\subset \mathbb{S}^1$ with $D(Y)<D(X)$. \\

Five-point two-distance sets in $\mathbb{R}^3$ are classified (\cite{Ein2}).
We give coordinates of these sets in table 1. For any sets $Y$ in table 1, 
$1\in A(Y)$. Let $d$ be the other distance in $A(Y)$. 

We divide into following two cases; Case 1: $Y\ne R_5$, Case 2: $Y=R_5$\\ 

\noindent 
Case 1: 
Let $Y=\{Q_1, Q_2, \cdots ,Q_5\}$ be a five-point two-distance set in $\mathbb{R}^3$ 
except $R_5$. Let 
$$Candi(Y)=\{P\in \mathbb{R}^3 | Y \cup \{P\} \textit{is a $k$-distance set with six points, }k\leq 3\}.$$

\begin{lem}\label{candidates}
Let $Y$ be a five-point two-distance set in $\mathbb{R}^3$ except $R_5$. 
Then $|Candi(Y)|<+\infty$.
\end{lem}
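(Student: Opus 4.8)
The plan is to realize $Candi(Y)$ as a finite union of intersections of spheres and planes and to show each such intersection is finite, pinpointing the single degenerate configuration that would instead give a positive-dimensional piece and checking it forces $Y=R_5$. Writing $A(Y)=\{1,d\}$, if $P\in Candi(Y)$ then the five numbers $PQ_1,\dots,PQ_5$ can take at most one value $e$ outside $\{1,d\}$, so there is a partition $\{1,\dots,5\}=G_1\sqcup G_d\sqcup G_e$ with $PQ_i=1$ on $G_1$, $PQ_i=d$ on $G_d$, and $PQ_i=e$ (a common, free value) on $G_e$. Letting $S(Q,r)$ denote the sphere of radius $r$ centred at $Q$ and $B_{ij}$ the perpendicular bisector plane of $Q_iQ_j$, every such $P$ lies in
\[
Z(G_1,G_d,G_e)=\bigcap_{i\in G_1}S(Q_i,1)\cap\bigcap_{i\in G_d}S(Q_i,d)\cap\bigcap_{i,j\in G_e}B_{ij}.
\]
There are finitely many partitions, so it suffices to prove that each $Z(G_1,G_d,G_e)$ is finite whenever $Y\neq R_5$; equivalently, that none contains a line or a circle.

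The key elementary input I would use is that a two-distance set with at least four points contains no three collinear points: three collinear two-distance points must be equally spaced with distance ratio $2$, and a direct check shows no further point can lie at one of the two allowed distances from all three. Now fix a partition and set $S=\bigcap_{i\in G_1}S(Q_i,1)\cap\bigcap_{i\in G_d}S(Q_i,d)$ and $M=\bigcap_{i,j\in G_e}B_{ij}$, the equidistant locus of $\{Q_i:i\in G_e\}$, so that $Z=S\cap M$ and $\dim M=3-\dim\operatorname{aff}\{Q_i:i\in G_e\}$. If $|G_1\cup G_d|\geq 3$, then three non-collinear centres already make $S$ finite. If $|G_1\cup G_d|=2$, then $S$ is a circle (or smaller) while $|G_e|=3$ forces $\dim M=1$, so $S\cap M$ is finite. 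If $|G_1\cup G_d|=1$, then $S$ is a single sphere and $|G_e|=4$ gives $\dim M\leq 1$, so $S\cap M$ is again finite. Hence a line or circle can occur only in the remaining case $G_1\cup G_d=\varnothing$, where $P$ is equidistant from all of $Y$ and $Z=M$ is the circumcentre locus of $Y$.

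In that last case $\dim M=3-\dim\operatorname{aff}Y$, which is positive only when $Y$ is planar, and then $M$ is a line, nonempty exactly when $Y$ is concyclic. Thus $Candi(Y)$ is infinite only if $Y$ is a planar, concyclic five-point two-distance set, and by the classification underlying Table 1 the regular pentagon $R_5$ is the unique such set. Since $Y\neq R_5$, no $Z(G_1,G_d,G_e)$ contains a line or circle, each is finite, and therefore $Candi(Y)$ is finite. I expect the main obstacle to be the organization of the middle step---establishing the no-three-collinear fact and then checking, partition type by partition type, that the fixed-distance spheres $S$ and the equidistant locus $M$ can never share a whole circle---while the final identification of $R_5$ as the only concyclic planar case is then a direct consequence of the known classification.
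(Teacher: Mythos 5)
Your proposal is correct and takes essentially the same route as the paper: your partition $G_1\sqcup G_d\sqcup G_e$ is exactly the paper's split of $Y$ into $Old(P)$ and $New(P)$ with a case analysis on $Type(P)=|G_e|$, and in every case finiteness comes from intersecting distance spheres about points of $Old(P)$ with the equidistant locus of points of $New(P)$. The only differences are ones of explicitness: you spell out the no-three-collinear fact and pinpoint the planar concyclic degeneracy as forcing $Y=R_5$ (which the paper compresses into the unproved assertion that the points of $Y\neq R_5$ are noncoplanar), while the paper in turn extracts the explicit count $\binom{5}{3}\cdot 16+\binom{5}{3}\cdot\binom{2}{1}\cdot 4+1$ that it needs for the subsequent enumeration of $Candi(Y)$.
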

\begin{proof}
For $P\in Candi(Y)$, let $Old(P)=\{Q\in Y | PQ\in A(Y)\}$, $New(P)=\{Q\in Y | PQ\notin A(Y)\}$ and 
$Type(P)=|New(P)|$.  We consider three cases. 
First we consider the case where $Type(P)\leq 2$.
Suppose $\{Q_1, Q_2, Q_3\}\subset Old(P)$. 
Since $PQ_i\in A(Y)$, the possibility of $P$ is at most $2\cdot 2^3$. 
Next we consider the case where $Type(P)=3, 4$.
Suppose $\{Q_1, Q_2, Q_3\}\subset New(P)$ and $Q'\in Old(P)$. 
Let $L$ be the perpendicular of the plane determined by $Q_1, Q_2, Q_3$ 
which pass through the center of $Q_1, Q_2, Q_3$. 
Since $P$ lies on $L$ and $PQ'\in A(Y)$, the possibility of $P$ is at most $4$. 
Finally we consider the case where $Type(P)=5$. 
Since the points in $Y$ are noncoplanar, the possibility of $P$ is at most $1$. 

Therefore $|Candi(Y)|\leq \binom{5}{3}\cdot 16+\binom{5}{3}\cdot\binom{2}{1}\cdot 4 +1$
\end{proof}

By Lemma \ref{candidates}, we can find all point in $Candi(Y)$ 
for any five-point two-distance set $Y$ except $R_5$. 
For $P\in Candi(Y)$, 
we define $ND(P)$ as the distance containing $A(Y\cup \{P\})$ but not containing $A(Y)$ 
if $Y\cup \{P\}$ is a three-distance set, 
and $D(Y)$ if $Y\cup \{P\}$ is a two-distance set.
In table 2, we give all elements $P\in Candi(Y)$ satisfying $ND(P)\geq D(Y)$. 
For $P_1, P_2 \in Candi(Y)$, $Y\cup \{P_1, P_2\}$ is a 7-point three-distance set 
only if either $ND(P_i)=D(Y)$ for some $i=1,2$ or $ND(P_1)=ND(P_2)$. 
In table 2, we divide $P\in Candi(Y)$ by $ND(P)$. 
It is easily to check that vertices of the regular icosahedron is 
the only $12$-point three-distance set containing five-point two-distance sets 
under the assumption in case 1. \\

\noindent 
Case 2: Let $R_5=\{Q_1, Q_2, \cdots ,Q_5 \}$ be labeled by cyclic order and 
$A(R_5)=\{1, \frac{1+\sqrt{5}}{2}\}$ 
Let $X$ be a $12$-point three-distance set in $\mathbb{R}^3$ containing $R_5$ and $d_3=D(X)\notin A(R_5)$. 
Let $L$ be the perpendicular of the plane determined by $R_5$ which pass through the center of $R_5$. 
Suppose $P_0\in X$ is on $L$. 
We may assume $P_0Q_1=d_3$, otherwise $R_5 \cup \{P\}$ containing another five-point two-distance set 
and such $X$ is actually Case 1. 
If $P_0'\ne P_0$ is also on $L$, then $P_0P_0'>d_3$. 
Therefore other points $P\in X'=X \setminus (R_5 \cup \{P_0 \})$ satisfy one of the following: \\

$Type(1)=\{ P\in X' | PQ_{i-1}=PQ_{i}=1, PQ_{i-2}=PQ_{i+1}=\frac{1+\sqrt{5}}{2} , PQ_{i+2}=d_3\}$; 

$Type(2)=\{ P\in X' | PQ_i =1, PQ_{i-1}=PQ_{i+1}=\frac{1+\sqrt{5}}{2}, PQ_{i-2}=PQ_{i+2}=d_3\}$.\\

Suppose $P\in Type(1)$. Then $R_5 \cup \{P\}$ contains a five-point two-distance set different from $R_5$. 
Next suppose $P\in Type(2)$. Then $R_5 \cup \{P\}$ is a subset of the vertices of a dodecahedron.
Since $|X'|\geq 6$, $X$ must contain an $8$-point subset of the vertices of a dodecahedron. 
However this can not be a three-distance set. \\

\newpage 
Table 1. Coordinates of the five-point two-distance sets.\\

\begin{tabular}{|c|c|c|}
\hline 
$Y$ & Coordinates & $d$ \\
\hline 
1 & 
$(-1/2, 0, 0)$, $(1/2, 0, 0)$, $(0, \sqrt{3}/2, 0)$, $(0, \sqrt{3}/6, \sqrt{6}/3)$, $(0, \sqrt{3}/6, -\sqrt{6}/3)$
& $2\sqrt{6}/3$ \\ 
\hline
2 & 
$(-1/2, 0, 0)$, $(1/2, 0, 0)$, $(0, \sqrt{3}/2, 0)$, $(0, -\sqrt{2}/2, 1/2)$, $(0, -\sqrt{2}/2, -1/2)$ 
& $\sqrt{(3+\sqrt{6})/2}$ \\
\hline
3 & 
$(-1/2, 0, 0)$, $(1/2, 0, 0)$, $(0, \sqrt{3}/2, 0)$, $(0, \sqrt{2}/2, 1/2)$, $(0, \sqrt{2}/2, -1/2)$ 
& $\sqrt{(3-\sqrt{6})/2}$\\ 
\hline
4 & 
$(-1/2, -1/2, 0)$, $(-1/2, 1/2, 0)$, $(1/2, -1/2, 0)$, $(1/2, 1/2, 0)$, $(0, 0, \sqrt{2}/2)$
& $\sqrt{2}$\\ 
\hline
5 & 
$(-1/2, 0, 0)$, $(1/2, 0, 0)$, $(0, \sqrt{3}/2, 0)$, 
$(0, \sqrt{3}/6, \sqrt{6}/3)$, $(0, \sqrt{3}/6, \sqrt{6}/3+1)$ 
& $\sqrt{2(3+\sqrt{6})/3}$ \\  
\hline
6 & 
$(-1/2, 0, 0)$, $(1/2, 0, 0)$, $(0, \sqrt{3}/2, 0)$, $(0, \sqrt{3}/6, \sqrt{6}/3)$, 
$(0, \sqrt{3}/6, \sqrt{6}/3-1)$ 
& $\sqrt{2(3-\sqrt{6})/3}$\\  
\hline
7 & 
$(-1/2, 0, 0)$, $(1/2, 0, 0)$, $(0, \sqrt{3}/2, 0)$, 
$(0, \sqrt{3}/6, 1/3)$, $(0, \sqrt{3}/6, -1/3)$ 
& $2/3$\\ 
\hline
8 & 
$(a_1, 1/2, 0)$, $(a_1, -1/2, 0)$, $(a_2, \tau/2, 0)$, 
$(a_2, -\tau/2, 0)$, $(0, 0, \sqrt{3/4-{a_1}^2})$ 
& $\tau$\\ 
\hline
9 & 
$(a_1, 1/2, 0)$, $(a_1, -1/2, 0)$, $(a_2, \tau/2, 0)$, 
$(a_2, -\tau/2, 0)$, $(0, 0, \sqrt{\tau^2-1/4-{a_1}^2})$ 
& $\tau$\\ 
\hline
10 & 
$(-1/2, 0, 0)$, $(1/2, 0, 0)$, $(0, \sqrt{a_3-1/4}, 0)$, 
& $\sqrt{a_3}$\\ 
& $(0, \sqrt{3-a_3}/2, \sqrt{a_3}/2)$, $(0, \sqrt{3-a_3}/2, -\sqrt{a_3}/2)$ 
&  \\
\hline
11 & 
$(-1/2, 0, 0)$, $(1/2, 0, 0)$, $(0, \sqrt{a_4-1/4}, 0)$, 
& $\sqrt{a_4}$\\ 
& $(0, -\sqrt{3-a_4}/2, \sqrt{a_4}/2)$, 
$(0, -\sqrt{3-a_4}/2, -\sqrt{a_4}/2)$ 
&  \\
\hline
12 & 
$(-1/2, 0, 0)$, $(1/2, 0, 0)$, $(0, \sqrt{3}/2, 0)$, 
$(0, \sqrt{3}/6, \sqrt{6}/3)$, $(0, \sqrt{3}/6, 1/(2\sqrt{6})$ 
& $\sqrt{3/8}$\\ 
\hline
13 & 
$(-1/2, 0, 0)$, $(1/2, 0, 0)$, $(0, \sqrt{3}/2, 0)$, $(0, \sqrt{3}/6, \sqrt{a_5-1/3})$, 
& $\sqrt{a_5}$\\
& $(0, \sqrt{3}/2-(2-a_5)/\sqrt{3}, -\sqrt{(-{a_5}^2+4a_5-1)/3})$ 
&  \\
\hline
14 & 
$(-1/2, 0, 0)$, $(1/2, 0, 0)$, $(0, \sqrt{3}/2, 0)$, $(0, \sqrt{3}/6, \sqrt{a_6-1/3})$, 
& $\sqrt{a_6}$\\
& $(0, \sqrt{3}/2-(2-a_6)/\sqrt{3}, \sqrt{(-{a_6}^2+4a_6-1)/3})$ 
&  \\
\hline
15 &
$(-1/2, 0, 0)$, $(1/2, 0, 0)$, $(0, \sqrt{3}/2, 0)$, $(0, \sqrt{3}/6, \sqrt{a_7-1/3})$, 
& $\sqrt{a_7}$\\
& $(0, \sqrt{3}/2-a_7/\sqrt{3}, \sqrt{(-{a_7}^2+3a_7)/3})$ 
&  \\
\hline
16 &
$(-1/2, 0, 0)$, $(1/2, 0, 0)$, $(0, \sqrt{3}/2, 0)$, $(0, \sqrt{3}/6, \sqrt{a_8-1/3})$, 
& $\sqrt{a_8}$\\
& $(0, \sqrt{3}/2-a_8/\sqrt{3}, -\sqrt{(-{a_8}^2+3a_8)/3})$ 
& \\
\hline
17 &
$(-1/2, 0, 0)$, $(1/2, 0, 0)$, $(0, \sqrt{3}/2, 0)$, 
$(0, \sqrt{3}/6, 1/2)$, $(0, \sqrt{3}/6, -1/2)$ 
& $\sqrt{7/12}$\\
\hline
18 & 
$(0, 0, 0)$, $(-1/2,-\sqrt{3}/2, 0)$, $(1/2, -\sqrt{3}/2, 0)$, 
$(0, \sqrt{3}/2, 1/2)$, $(0, \sqrt{3}/2, -1/2)$ 
& $\sqrt{7/2}$\\
\hline
19 & 
$(-1/2, -1/2, 0)$, $(-1/2, 1/2, 0)$, $(1/2, -1/2, 0)$, 
$(1/2, 1/2, 0)$, $(0, 0, \sqrt{6}/2)$ 
& $\sqrt{2}$\\
\hline
20 & 
$(-1/2, 0, 0)$, $(1/2, 0, 0)$, $(-1/2, 0, 1)$, 
$(1/2, 0, 1)$, $(0, \sqrt{3}/2, 0)$ 
& $\sqrt{2}$\\
\hline 
21 & 
$(-1/2, 0, 0)$, $(1/2, 0, 0)$, $(0, \sqrt{3}/2, 0)$, 
& $\sqrt{5-\sqrt{21}}$\\
& $((\sqrt{3}a_9-1)/2, a_9/2, \sqrt{1-{a_9}^2})$, $((-\sqrt{3}a_9+1)/2, a_9/2, -\sqrt{1-{a_9}^2})$ 
&  \\
\hline
22 & 
$(-1/2, 0, 0)$, $(1/2, 0, 0)$, $(0, \sqrt{3}/2, 0)$, 
& $\sqrt{a_{10}}$\\ 
& $(0, \sqrt{3}/2-\sqrt{1-a_{10}/4}, \sqrt{a_{10}}/2)$, 
$(0, \sqrt{3}/2-\sqrt{1-a_{10}/4}, -\sqrt{a_{10}}/2)$ 
&  \\
\hline
23 & 
$(-1/2, 0, 0)$, $(1/2, 0, 0)$, $(0, \sqrt{3}/2, 0)$, 
& $\sqrt{a_{11}}$\\ 
& $(0, \sqrt{3}/2+\sqrt{1-a_{11}/4}, \sqrt{a_{11}}/2)$, 
$(0, \sqrt{3}/2+\sqrt{1-a_{11}/4}, -\sqrt{a_{11}}/2)$ 
& \\
\hline
24 & 
$(-\tau/2, 0, 0)$, $(\tau/2, 0, 0)$, $(0, \sqrt{3}\tau/2, 0)$, 
& $\tau$\\
& $(-1/2, (2\tau+1)/(2\sqrt{3}\tau), 1/\sqrt{3})$, 
$(1/2, (2\tau+1)/(2\sqrt{3}\tau), 1/\sqrt{3})$ 
&  \\
\hline
25 & 
$(-1/2, 0, 0)$, $(1/2, 0, 0)$, $(0, \sqrt{3}/2, 0)$, $(0, \sqrt{3}/6, \sqrt{a_{12}-1/3})$, 
& $\sqrt{a_{12}}$\\
& $(0, \sqrt{3}/2-(2-a_{12})/\sqrt{3}, -\sqrt{-{a_{12}}^2+4a_{12}-1}/\sqrt{3})$ 
& \\
\hline
26 & 
$(-1/2, 0, 0)$, $(1/2, 0, 0)$, $(0, \sqrt{3}/2, 0)$, $(0, \sqrt{3}/6, \sqrt{a_{13}-1/3})$, 
& $\sqrt{a_{13}}$\\
& $(0, \sqrt{3}/2-(2-a_{13})/\sqrt{3}, \sqrt{-{a_{13}}^2+4a_{13}-1}/\sqrt{3})$ 
&  \\
\hline
27 & 
$(\cos{(2\pi j/5)}, \sin{(2\pi j/5)}, 0)$, $j=0, 1, \cdots ,4$ 
& $\tau$ \\
\hline 
\end{tabular}
\\

Hear $\tau =(1+\sqrt{5})/2$, 
$a_1=1/(2\tan{(\pi/5)})$, $a_2=\tau /(2\tan{(3\pi/5)})$, 
$a_3=(17+\sqrt{161})/16$, $a_4=(17-\sqrt{161})/16$, 
$a_5=0.35637\cdots $, $a_6=3.4396\cdots $, 
($a_5, a_6$ are roots of $-4x^3+16x^2-8x+1$), 
$a_7=2.2934\cdots $, $a_8=0.44365\cdots $, 
($a_7, a_8$ are roots of $4x^3-8x^2-4x+3$), 
$a_9=\sqrt{7}-\sqrt{3}$, 
$a_{10}=(13-\sqrt{105})/8$, $a_{11}=(13+\sqrt{105})/8$, 
$a_{12}=0.52482\cdots $, $a_{13}=2.4903\cdots $, 
($a_{12}, a_{13}$ are roots of $4x^3-9x^2-4x+4$).

\newpage 
Table 2. $Candi(Y)$ for every five-point two-distance set $Y$. \\ 

\noindent 
\begin{tabular}{|c|c|c|}
\hline 
$Y$ & $Candi(Y)$ & $ND(P)$ \\
\hline 
1 & (0, -0.67358, 0.54331), (0, -0.67358, -0.54331), (0.83333, 0.7698, 0.54433), & 1.6667\\
  & (0.83333, 0.7698, -0.54433), (-0.83333, 0.7698, 0.54433), (-0.83333, 0.7698, -0.54433) & \\
\hline
2 & (0.86237, -0.35355, 0.86237), (0.86237, -0.35355, -0.86237), & 1.7247\\
  & (-0.86237, -0.35355, 0.86237), (-0.86237, -0.35355, -0.86237) & \\
\cline{2-3}
  & (0, -1.5731, 0) & 2.4392\\
\hline 
3 & (0, -2.8868, 0.81650), (0, -2.8868, -0.81650) & 1.3814 \\
\cline{2-3}
  &  (0, 0.15892, 0) & 1.0249 \\
\hline 
4 & (0, 0, -0.70711) & 1.4141 \\
\cline{2-3}
  & (0, 1.3874, 0.98106), (0, -1.3874, 0.98106), (1.3874, 0, 0.98106), & 2.1851\\
  & (-1.3874, 0, 0.98106), (0, 1.3874, -0.98106), (0, -1.3874, -0.98106), &\\
  & (1.3874, 0, -0.98106), (-1.3874, 0, -0.98106), &\\ 
\cline{2-3}
  & (0, 1, 0.70711), (0, -1, 0.70711), (1, 0, 0.70711), (-1, 0, 0.70711), & 1.7321\\
  & (0, 1, -0.70711), (0, -1, -0.70711), (1, 0, -0.70711), (-1, 0, -0.70711) & \\ 
\cline{2-3}
  & (0, 0.5, -0.86603), (0, -0.5, -0.86603), (0.5, 0, -0.86603), (-0.5, 0, -0.86603) & 1.6507\\
\cline{2-3}
  & (0, 0.81650, 1.2845), (0, -0.81650, 1.2845), (0.81650, 0, 1.2845), (-0.81650, 0, 1.2845) & 1.9060 \\
\cline{2-3}
  & (0, 1.3333, 0.2570), (0, -1.3333, 0.2570), (1.3333, 0, 0.2570), (-1.3333, 0, 0.2570) & 1.9149\\
\cline{2-3}
  & (0, 0, -1.2247) & 1.9319\\
\hline
5 & (0, 1.8088, -0.33333), (1.3165, -0.47140, -0.33333), (-1.3165, -0.47140, -0.33333) & 2.6330\\ 
\hline 
8 & (0.85065, 0, -0.85065), (-0.85065, 0, 0) & 1.6180\\
\cline{2-3}
  & (0.26287, 0.80902, -0.85065), (0.26287, -0.80902, -0.85065), (0, 0, -1.3764)  & 1.9021\\ 
  & (-0.68819, 0.5, -0.85065), (-0.68819, -0.5, -0.85065), (0.52573, 0, 1.3764)& \\ 
\cline{2-3}
  & (1.6115, 0, -1.2311), (-1.6115, 0, 0.38042) & 2.3840\\
\cline{2-3}
  & (1.55163, 0, 0) & 1.9878\\
\cline{2-3}
  & (-0.69369, 0, -1.3334) & 1.9843\\
\cline{2-3}
  & (-0.99596, 0, -1.4662) & 1.8422\\
\cline{2-3}
  & (0.28355, 0, 0.82408) & 1.6779\\
\cline{2-3}
  & (0.28355, 0, 1.4847) & 1.7769\\
\cline{2-3}
  & (1.3764, 0, 1.3764) & 2.2882\\
\hline 
9 & (0.85065, 0, 0.85065), (-0.85065, 0, 0) & 1.6180\\
\cline{2-3}
  & (0.26287, 0.80902, 0.85065), (0.26287, -0.80902, 0.85065), (0, 0, -0.52573), & 1.9021\\
  & (-0.68819, 0.5, 0.85065), (-0.68819, -0.5, 0.85065), (1.3764, 0, 0.525573)& \\
\cline{2-3}
  & (0.85065, 0, -0.85065), (1.6115, 0, 1.2311), (-1.6115, 0, -0.38042) & 2.3840\\
\cline{2-3}
  & (1.5443, 0, 0.13038) & 1.9843\\
\cline{2-3}
  & (0, 0, -1.3764) & 2.7528\\
\cline{2-3}
  & (-0.98453, 0, 1.2011) & 2.1191\\
\cline{2-3}
  & (0.99121, 0, 1.5087) & 2.1221\\
\cline{2-3}
  & (-1.4729, 0, 0.70663) & 2.3280\\
\cline{2-3}
  & (2.2270, 0, 0) & 2.6180\\
\hline
10 & (0.86527, 0.87634, 0.31403), (0.86527, 0.87634, -0.31403), & 1.6524\\
   & (-0.86527, 0.87634, 0.31403), (-0.86527, 0.87634, -0.31403) & \\
\hline 
11 & (0.072232, -0.56735, 0.70366), (0.072232, -0.56735, -0.70366), & 1.0698\\
   & (-0.072232, -0.56735, 0.70366), (-0.072232, -0.56735, -0.70366),  & \\
\cline{2-3}
   & (0.85554, -0.37823, 0), (-0.85554, -0.37823, 0) & 1.4073\\
\hline 
13 & (0.40303, 0.14977, 0.56968), (-0.40303, 0.14977, 0.56968), (0, 0.19489, -0.84381)& 1.7082\\
\hline 
14 & (1.2198, -0.41560, 0.55597), (-1.2198, -0.41560, 0.55597) & 2.4397\\
\hline 
15 & (0.84818, 0.031607, 0.93689),  (-0.84818, 0.031607, 0.93689) & 1.6421\\
\hline 
16 & (0.65366, 0.98727, 0.041123), (-0.65366, 0.98727, 0.041123) & 1.5190\\
\hline 
17 & (0.75, 0.72169, 0), (-0.75, 0.72169, 0), (0, -0.57735, 0) & 1.4434\\
\hline 
\end{tabular}

\noindent 
\begin{tabular}{|c|c|c|}
\hline 
19 & (1, 0, 1.2247), (-1, 0, 1.2247), (0, 1, 1.2247), (0, -1, 1.2247) & 2 \\
\cline{2-3}
   & (1.3923, 0, 0.97664), (-1.3923, 0, 0.97664), (0, 1.3923, 0.97664), (0, -1.3923, 0.97664) & 2.1874\\
\cline{2-3}
   & (1.2110, 0, 0.49440), (-1.2110, 0, 0.49440), (0, 1.2110, 0.49440), (0, -1.2110, 0.49440) & 1.8499\\
\cline{2-3}
   & (0.5, 0, -8.6603), (-0.5, 0, -8.6603), (0, 0.5, -8.6603), (0, -0.5, -8.6603) & 2.1497\\
\cline{2-3}
   & (0.89181, 0, 0.77233), (-0.89181, 0, 0.77233), (0, 0.89181, 0.77233), (0, -0.89181, 0.77233) & 1.6684\\
\cline{2-3}
   & (1.3534, 0, -0.14447), (-1.3534, 0, -0.14447), (0, 1.3534, -0.14447), (0, -1.3534, -0.14447) & 1.9256\\
\cline{2-3}
   & (1.7837, 0, -3.1991), (-1.7837, 0, -3.1991), (0, 1.7837, -3.1991), (0, -1.7837, -3.1991) & 2.3003\\
\cline{2-3}
   & (0, 0, -1.2247) & 2.4495\\
\cline{2-3}
   & (0, 0, -0.70711) & 1.9319\\
\hline 
20 & (0, 0.86603, 1) & 1.1412\\
\cline{2-3}
   & (0.5, -0.86603, 0.5), (-0.5, -0.86603, 0.5), (0.75, 1.2990, 0.5), & 1.8708\\
   & (-0.75, 1.2990, 0.5), (1.25, 0.43301, 0.5), (-1.25, 0.43301, 0.5), & \\
\cline{2-3}
   & (0.86237, 0.78657, 0.5), (-0.86237, 0.78657, 0.5), (0, -0.70711, 0.5) & 1.6507\\
\cline{2-3}
   & (1.3107, 1.0454, 0.5), (-1.3107, 1.0454, 0.5), (0, -1.2247, 0.5)  & 2.1497\\
\cline{2-3}
   & (0, 0.28868, -0.81650), (0, -0.28868, -0.81650), (0, 0.28868, 1.81650) & 1.9060\\
\cline{2-3}
   & (1.25, -4.330, 0.5), (-1.25, -4.330, 0.5) & 1.8708\\
\cline{2-3}
   & (1.6456, -0.66144, 0.5), (-1.6456, -0.66144, 0.5) & 2.3003\\
\cline{2-3}
   & (0, 0.28868, -1.2910), (0, 0.28868, 2.2910) & 2.3626\\
\cline{2-3}
   & (0, 0.76259, 1.4104), (0, -0.76259, -0.41043) & 1.6795\\ 
\cline{2-3}
   & (0, 0.86603, 1) & 2\\
\cline{2-3}
   & (0, -0.86603, 0) & 1.7321\\
\cline{2-3}
   & (0, 0.86603, -1) & 2.2361\\
\cline{2-3}
   & (0, 1.2747, 1.3539) & 1.9256\\
\cline{2-3}
   & (0, 1.3184, 0.89181) & 1.6684\\
\cline{2-3}
   & (0, -1.1135, -0.71429) & 2.1044\\
\hline 
21 & (0.69909, 0.35595, -0.50109), (-0.69909, 0.35595, 0.50109) & 1.3474\\
\cline{2-3}
   & (0, -4.0918, 0) & 1.2752\\
\cline{2-3}
   & (0, 0.71189, -1)& 1.3254\\
\hline 
22 & (0.45293, 0.17151, 0.55902), (0.45293, 0.17151, -0.55902), & 1.1180\\
   & (-0.45293, 0.17151, 0.55902), (-0.45293, 0.17151, -0.55902) & \\
\hline 
24 & (0, 0.11026, 0.57735), (0, -0.11026, 0.57735), (0, 1.0444, -0.93417) & 1.6180\\
\cline{2-3}
   & (0.80902, 0.93417, -0.35682), (-0.80902, 0.93417, -0.35682), & 1.9021\\
   & (0.5, 0.17841, -0.93417), (-0.5, 0.17841, -0.93417), (0, -0.46709, -0.35682)& \\
\cline{2-3}
   & (0, 46709, -1.3211), (0, 1.5153, 0.99348) & 1.9843\\
\cline{2-3}
   & (0, -0.62667, 1.2533) & 2.3840\\
\cline{2-3}
   & (0, 0.36846, 1.3519) & 1.7013\\
\cline{2-3}
   & (0, 0.65954, 1.4380) & 1.7769\\
\cline{2-3}
   & (0, 1.9786, 1.5115) & 2.6180\\
\cline{2-3}
   & (0, 2.2900, 0.45841) & 2.4716\\
\cline{2-3}
   & (0, 0.46709, 1.3938) & 1.6779\\
\cline{2-3}
   & (0, 0.46709, 2.0889) & 2.2882\\
\cline{2-3}
   & (0, -0.53260, -1.2961) & 2.3280\\
\hline 
25 & (0.67209, 0.67671, -0.19306), (-0.67209, 0.67671, -0.19306) & 1.3671\\
\cline{2-3}
   & (0.23759, 0.64934, 0.18525) & 1.5450\\
\hline 
26 & (0.85675, 0.78332, 1.32267), (-0.85675, 0.78332, 1.32267) & 2.0503\\
\hline 
\end{tabular}

\makeatletter
\def\@biblabel#1{#1}
\makeatother

\end{document}